\DeclareMathOperator*{\tr}{\operatorname{tr}}
\DeclareMathOperator*{\spanop}{\operatorname{span}}
\DeclareMathOperator*{\dof}{\operatorname{dof}}
\newcommand{\defeq}{\vcentcolon=}
\newcommand{\norm}[1]{{\left\Vert {#1} \right\Vert}}
\newcommand{\abs}[1]{{\left\vert {#1} \right\vert}}
\newtheorem{theorem}{Theorem}
\newtheorem{problem}{Problem}
\newtheorem{remark}{Remark}
\title{A multi-layer reduced model for flow in porous media with a fault and
surrounding damage zones}
\author{Alessio Fumagalli \and Anna Scotti}
\begin{document}

\maketitle

\begin{abstract}
    In this work we present a new conceptual model to describe  fluid
    flow in a porous media system in presence of a large
    fault. {Geological faults are often
    modeled simply as interfaces in the rock matrix, but they are complex
    structure where the high strain core is surrounded by the so called damage
    zones, characterized by the presence of smaller fractures which enhance the
    permeability of the medium. To obtain reliable simulation outcomes these
    damage zone, as well as the fault, have to be accurately described}. The
    new model {proposed in this work} considers {both} these two
    {regions} as lower dimensional and embedded in the rock matrix. The model is
    presented, analyzed, and tested in several configurations to prove its
    robustness and ability to capture many important features, such as hight
    contrast and heterogeneity of permeability.
\end{abstract}

\section{Introduction}

The accurate description and simulation of fluid flow in {geological} porous media are of
paramount importance for several industrial applications, such as: CO$_2$
injection and sequestration, geothermal exploitation, oil migration and
recovery, and prevention of groundwater contamination due to nuclear waste
disposal, just to name a few. See \cite{bear1993flow,Karimi-Fard2006,Hui2008}. Despite {the relevance of the topic}, a number of challenges are
not yet fully solved, in particular related to the presence of faults and
fractures in the domain of interest. Faults and fractures are the portion of the
porous media where the rock has been broken due to geological movements of the
upper crust. In this study we consider only large faults.

{A tectonic fault is the result of a relative displacement of two parts of the
upper crust happened over geological eras. This displacement is accommodate by a high strain region, the fault core,
surrounded on both sides by a highly fractured region, the damage zone.}
These layer contain several fractures, on a much smaller scale than
the fault, which may alter the local properties of the flow path. Faults have a thickness which is several order of magnitude
smaller than any other characteristic sizes in the porous domain, however their
physical properties may greatly differ from the porous media.
Due to infilling processes and chemical reactions these objects
may be partially or completely occluded, thus
they can behave as preferential conduits or geological barriers for the fluid
flow. The surrounding damage zone may or may not had experienced the same
processes behaving similarly to, or differently from than the related fault.
The development of accurate conceptual models is a key factor to be able to include
these objects and their effect in a simulation code and obtain reliable outcomes
and predictions. {The aim of this work is to devise a new effective conceptual model to account for multiple thin regions in porous media.}

One possibility to account for the presence of faults is
{to characterize this region with a different permeability}, but its
{small} thickness makes difficult or even unrealistic its
inclusion in the grid representing the rock matrix.
A common approach, introduced in \cite{Jaffre2002,Tunc2012,Faille2014a,Faille2014}, is to consider faults as a lower
dimensional objects and derive a new conceptual model to describe the flow and
pressure behavior inside and across these objects. This approach has
been successfully applied to different kind of physics, ranging from advection
of a passive scalar, heat transport, multi-phase flow. An incomplete list of references is the following
\cite{Martin2005,Reichenberger2006,Hoteit2008,Angot2009,Morales2010,Jaffre2011,DAngelo2011,Frih2011,Formaggia2012,Morales2012,Fumagalli2012d,Sandve2012,Knabner2014,Schwenck2015,DelPra2015a,Ahmed2015,Brenner2015,Karimi-Fard2016,Brenner2016a,Antonietti,Scotti2017,Flemisch2016a,Boon2018,Brenner2018,Nordbotten2018}.

In this work we extend the previously introduced models to consider also the
damage zone as a lower dimensional objects which are connected on one side to
the rock matrix and, on the other side, to the fault. {The aim is to be able to}
simulate different scenarios where the rock matrix, damage zone, and fault may
have different permeability values {without resorting to extreme mesh refinement
to capture the thickness of the damage and core zones. Moreover this multilayer
approach allows for different apertures and asymmetries across the fault, unlike
the previous models}.

The numerical discretization is based on the classical
Raviart-Thomas-N\'ed\'elec approximation for the flux field and a constant
piecewise representation of the pressure. The resulting scheme is locally mass
conservative and is thus suitable {for} coupling with a transport problem.
For clarity in the exposition, we consider only one single fault, {since} the case of
intersecting faults requires additional model complexities.

For the implementation of the numerical examples, we have used the library
PorePy \cite{Keilegavlen2017a}, which is a simulation tool written in Python for fractured
and deformable porous media. The numerical tests presented in this paper are available in the GitHub repository of the
library. The main contribution of this work is the
introduction of the multi-layers interface law, valid for any dimension. Even if
we present an approximation and analysis based on the lowest order Raviart-Thomas-N\'ed\'elec, the implementation is
agnostic with respect to the numerical scheme. It is {indeed} possible to use {any other scheme}
present in the library, like two and multi point flux
approximation or the mixed virtual element method.

The paper is organised as follow: in Section \ref{sec:mathematical_model} both
the equi and mixed-dimensional mathematical models are introduced and discussed.
Section \ref{sec:weak_formulation} deals with the weak formulation of the
mixed-dimensional problem: functional spaces, weak problem, and it well
posedness. In Section \ref{sec:numerical_discretization} we briefly describe the
numerical scheme and how to handle the mixed-dimensional nature of the problem.
Section \ref{sec:numerical_examples} contains two numerical test to validate the
proposed model. Finally, Section
\ref{sec:conclusion} is devoted to conclusions.

\section{Mathematical problem}\label{sec:mathematical_model}

In this section we present the mathematical models considered to describe the
pressure and Darcy velocity governed by the single-phase {Darcy} flow equations.
In Subsection \ref{subsec:equidim}, we recall the classical formulation where
the fault and the damage zone are considered equi-dimensional with respect to
the rock matrix, {i.e. are represented as two dimensional in a two dimensional
porous matrix, and three dimensional in 3D.} Their thickness is thus explicitly
represented in the domain and {captured by the} computational grid. By
considering a model reduction approach, Subsection \ref{subsec:lowerdim}
presents the new formulation where the fault and damage zone are now
represented as lower-dimensional objects and new equations are introduced.

\subsection{Equi-dimensional model}\label{subsec:equidim}

{Let $\Omega$ denote the rock host matrix, $\mu$ denote the damage zone and $\gamma$ the fault core.}
The interfaces $M$ and $\Gamma$ between these objects can
be {defined} as $\overline{M } = \overline{\partial \Omega} \cap \overline{\mu}$ and
$\overline{\Gamma}
=
\overline{\mu} \cap \overline{\gamma}$, respectively. We define $\partial \Omega_{\rm int}$ as the part of the boundary of $\Omega$ which is in contact with $M$, and
with $\partial
\Omega$  the part of the boundary of
$\Omega$ which is not
in contact with $M$. We further subdivide
$\partial \Omega$ into two parts, $\partial \Omega_{p}$
and $\partial \Omega_{u}$, such that
$\overline{\partial \Omega
} = \overline{\partial \Omega_{p}} \cup \overline{\partial \Omega_{u}}$ with
$\partial \Omega_{p} \neq \emptyset$ and
$\emptyset = \mathring{\partial \Omega_{p}} \cup \mathring{\partial
\Omega_{u}}$. In $\partial \Omega_{p}$ we will impose natural boundary
conditions, while in $\partial \Omega_{u}$ essential boundary conditions.
In the same way, we define the boundary of the damage zone $\mu$ as $\partial
\mu_{\rm int}$ and $\partial
\mu$, being the internal and external portion of the boundary of $\mu$,
respectively. The external part $\partial \mu$ can be divided into two parts
$\partial \mu_{p}$
and $\partial \mu_{u}$ such that
$\overline{\partial \mu} = \overline{\partial \mu_{p}} \cup \overline{\partial
\mu_{u}}$ with
$\partial \mu_{p} \neq \emptyset$ and
$\emptyset = \mathring{\partial \mu_{p}} \cup \mathring{\partial
\mu_{u}}$. In $\partial \mu_{p}$ we will impose natural boundary
conditions, while in $\partial \mu_{u}$ essential boundary conditions.
The same nomenclature is introduced for the fault $\gamma$.
We define the unit normal $\bm{n}$, associated
with $\partial \Omega_{\rm int}$, which points from $\Omega$
to $M$. Similarly, we introduce the unit normal $\bm{n}_\mu$, associated with
$\partial \mu_{\rm int}$, which points from $\mu$ to $\Gamma$.
Finally, $\bm{\nu}$ is the external outward unit normal of the domain. See Figure \ref{fig:domain_equi} as an example.

We assume that the rock matrix $\Omega$, damage zone $\mu$, and fault $\gamma$
have the same spatial dimension. The damage zone and fault are characterized
by one of the dimension to be much (orders of magnitude) smaller than the
others.
\begin{figure}[htb]
    \centering
    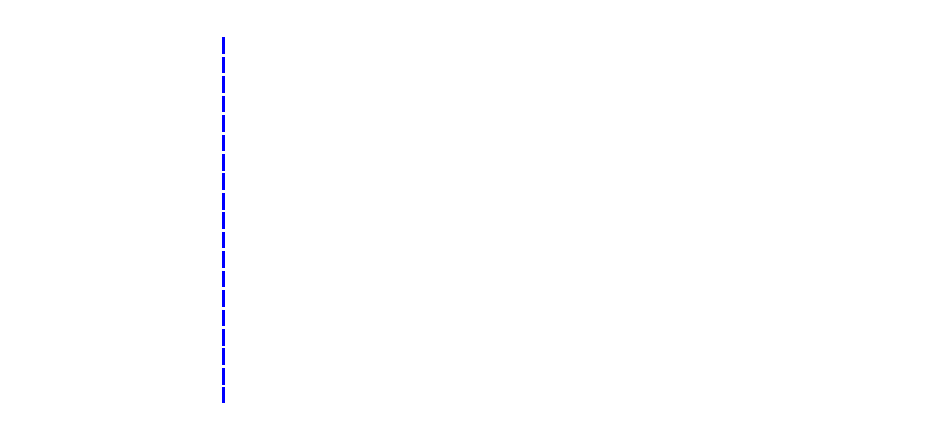
    \caption{Equi-dimensional representation of the rock matrix $\Omega$,
    damage zone $\mu$, and fault $\gamma$.}%
    \label{fig:domain_equi}
\end{figure}
We are interested in the mathematical description of the Darcy velocity and
pore pressure described by the Darcy problem. We indicate with $p_\Upsilon$ the
pressure and with $\bm{u}_\Upsilon$
the Darcy velocity, on the portion (rock matrix, damage zone, or fault)
$\Upsilon$ of
the problem, being $\Upsilon$ equal to $\Omega$, $\mu$, and $\gamma$.  The
system of equations reads: find $(\bm{u}_\Upsilon, p_\Upsilon)$ such that
\begin{subequations}\label{eq:model_equi}
\begin{gather}\label{eq:model_equi_eq}
    \begin{aligned}
        &
        \begin{aligned}
            &\sigma_{\Upsilon}^2 \bm{u}_\Upsilon + \nabla p_\Upsilon = \bm{0}\\
            &\nabla \cdot \bm{u}_\Upsilon + q_\Upsilon = 0
        \end{aligned}
        &&\text{in } \Upsilon\\
        &\tr p_\Upsilon = \overline{p_\Upsilon} &&\text{on } \partial \Upsilon_{p}\\
        &\tr \bm{u}_\Upsilon \cdot \bm{\nu}
        = \overline{u_\Upsilon} &&\text{on } \partial \Upsilon_{u}
    \end{aligned},
\end{gather}
The parameters are the inverse of the permeability $\sigma^2_\Upsilon$, the source or sink term
$q_\Upsilon$, and the
boundary conditions on the pressure $\overline{p_\Upsilon}$ and velocity
$\overline{u_\Upsilon}$. We make use of $\tr$ to indicate the trace operator.
To couple the problem in the three domains, we consider the following
transmission conditions
\begin{gather}\label{eq:model_equi_cc}
    \begin{aligned}
        &\begin{aligned}
            &\tr \bm{u}_\Omega \cdot \bm{n} = \tr \bm{u}_\mu \cdot \bm{n} \\
            &\tr p_\Omega = \tr p_\mu
        \end{aligned}
        && \text{on } M\\
        &\begin{aligned}
            &\tr \bm{u}_\mu \cdot \bm{n}_\mu = \tr \bm{u}_\gamma \cdot \bm{n}_\mu\\
            &\tr p_\mu = \tr p_\gamma
        \end{aligned}
        && \text{on }\Gamma
    \end{aligned}.
\end{gather}
\end{subequations}
Even if not explicitly used in the previous set of equations, we indicate with
$\epsilon_\mu$ and $\epsilon_\gamma$ the thickness of the damage zone and
fault, respectively.
To summarize the equi-dimensional system of equations, we introduce the following
problem.
\begin{problem}[Equi-dimensional model problem]\label{pb:model_equi}
    Find $(\bm{u}_\Upsilon, p_\Upsilon)$ such that
    \eqref{eq:model_equi} is satisfied, for $\Upsilon$ equal to $\Omega$, $\mu$,
    and $\gamma$.
\end{problem}

\subsection{Mixed-dimensional model}\label{subsec:lowerdim}

The {geometrical reduction of the }model  approximates the thin {regions, in our
case} the damage zone
and the fault, by their center line (a lower dimensional object) and derives new equations and coupling
conditions to describe the Darcy velocity and pressure field in the new setting.
See Figure \ref{fig:domain} as an example.
{In this work we follow the reduction procedure described in the literature in} \cite{Alboin2000a,Faille2002,Martin2005}.
To keep the notation simple, we preserve the same notation for the {rock matrix
$\Omega$, the} damage zone
$\mu$ and fault $\gamma$ {even if the domains are geometrically different from
the equi-dimensional case: in particular $\Omega$ is extended up to the center line of the fault core,
while the fault and the damage zone shrink and become overlapped lower dimensional interfaces.}
\begin{figure}[htb]
    \centering
    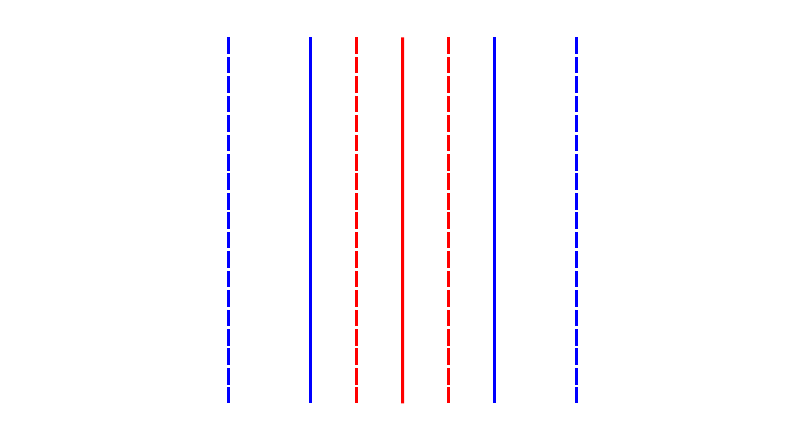
    \caption{Mixed-dimensional representation of the rock matrix $\Omega$,
    damage zone $\mu$,
    and fault $\gamma$.}%
    \label{fig:domain}
\end{figure}

The mixed-dimensional problem
describes the Darcy velocity and pressure field defined on the tangent space of
each domain. At the interfaces, to couple the problems, we consider
traces of the variables and the additional unknown
$u_{\Gamma}$, which can be interpreted as a normal Darcy velocity from the
damage zone
$\mu$ to the fault
$\gamma$. To simplify the notation, we introduce the pressure and Darcy velocity
compounds $p$ and $\bm{u}$, respectively, as
\begin{gather*}
    p = (p_{\Omega}, p_{\mu}, p_\gamma)
    \quad \text{and} \quad
    \bm{u} = (\bm{u}_{\Omega}, \bm{u}_{\mu}, \bm{u}_\gamma,
    u_{\Gamma}).
\end{gather*}
{Note that $\bm{u}_{\mu}$ and  $\bm{u}_{\gamma}$ are fluxes in the lower dimensional layers
and $u_\Gamma$ is a scalar variable representing the normal flux exchanged by lower dimensional objects.}
Following the approaches presented in \cite{Martin2005,Tunc2012,Faille2014a,Faille2014},
we {consider} the Darcy model in each domain separately. {First we consider the rock matrix,}
\begin{subequations}\label{eq:model}
\begin{gather}
    \begin{aligned}
        &
        \begin{aligned}
            &\alpha_{\Omega}^2 \bm{u}_\Omega + \nabla p_\Omega = \bm{0}\\
            &\nabla_\Omega \cdot \bm{u} + q_\Omega = 0
        \end{aligned}
        &&\text{in } \Omega\\
        &\tr p_\Omega = \overline{p_\Omega} &&\text{on } \partial \Omega_{p}\\
        &\tr \bm{u}_\Omega \cdot \bm{\nu}
        = \overline{u_\Omega} &&\text{on } \partial \Omega_{u}
    \end{aligned},
\end{gather}
where $\nabla_{\Omega} \cdot$ is the conservation operator (standard divergence)
in the rock matrix. The parameters are $\alpha_\Omega^2$ which is the inverse of the
rock matrix permeability, $q_\Omega$ which represents a scalar source or sink
term, and
$\overline{p_\Omega}$ and $\overline{u_\Omega}$ denoting the natural (pressure) and
essential (flux) boundary data, respectively.
In the damage zone $\mu$ the problem reads
\begin{gather}\label{2c}
    \begin{aligned}
        &
        \begin{aligned}
            &\alpha_{\mu}^2 \bm{u}_{\mu} + \nabla p_{\mu} = \bm{0}\\
            &\nabla_{\mu} \cdot \bm{u} + q_{\mu} = 0
        \end{aligned}
        && \text{in } \mu\\
        &\tr p_{\mu} = \overline{p_\mu} &&\text{on } \partial \mu_p\\
        &\tr \bm{u}_\mu \cdot \bm{\nu} = \overline{u_\mu} &&\text{on } \partial
        \mu_{u}
    \end{aligned},
\end{gather}
where $\nabla_{\mu} \cdot$ is the conservation operator (mixed-dimensional
divergence) {which, as detailed below, accounts for the tangential divergence in $\mu$ and for
the fluid exchanges between $\mu$ and $\gamma$, and $\mu$ and $\Omega$.}
In \eqref{2c} $\alpha_\mu^2$ is the inverse of the
effective matrix permeability in the damage zone, $q_\mu$ represents a scalar source or sink term,
$\overline{p_\mu}$ and $\overline{u_\mu}$ are the natural and
essential boundary data, respectively.
In the fault core $\gamma$ we have
\begin{gather}\label{2d}
    \begin{aligned}
        &
        \begin{aligned}
            &\alpha_\gamma^2 \bm{u}_\gamma + \nabla p_\gamma = \bm{0}\\
            &\nabla_\gamma \cdot \bm{u} + q_\gamma = 0
        \end{aligned}
        &&\text{in } \gamma\\
        &\tr p_\gamma = \overline{p_\gamma} &&\text{on } \partial \gamma_p\\
        &\tr \bm{u}_\gamma \cdot \bm{\nu} = \overline{u_\gamma} &&\text{on } \partial
        \gamma_{u}
    \end{aligned},
\end{gather}
where $\nabla_\gamma \cdot$ is the (inter-layer divergence) conservation operator
{accounting also for the exchange between the damage zone and the fault}.
In \eqref{2d}  $\alpha_\gamma^2$ is the inverse of the
effective matrix permeability in the fault, $q_\gamma$ represents a scalar source or sink term,
$\overline{p_\gamma}$ and $\overline{u_\gamma}$ are the natural and
essential boundary data, respectively.
The conservation operators in each domain are defined as
\begin{gather}\label{eq:gen_divergence}
    \nabla_{\Omega} \cdot \bm{u} \defeq \nabla \cdot \bm{u}_{\Omega},\\
    \nabla_{\mu} \cdot \bm{u} \defeq \nabla \cdot
    \bm{u}_{\mu} - \tr \bm{u}_{\Omega} \cdot \bm{n} + u_{\Gamma},\\
    \nabla_\gamma \cdot \bm{u} \defeq \nabla \cdot \bm{u}_\gamma -
    u_{\Gamma},
\end{gather}
{where} all the differential {(divergence)} operators are defined on the tangent space of the
associated manifold. {Moreover, these conservation operators account for the exchange terms between manifolds of, possibly, different dimensionality.}
To couple the problems we introduce the following interface laws which can be
interpreted as {projections of the Darcy law in the direction normal to the interfaces.}
\begin{align}\label{eq:model_cc}
    \begin{aligned}
        & \alpha_M^2 \tr \bm{u}_{\Omega} \cdot \bm{n} + p_{\mu} - \tr p_{\Omega} = 0
        &&\text{on } M \\
        &\alpha_\Gamma^2 u_{\Gamma} + p_\gamma - p_{\mu} = 0
        &&\text{on } \Gamma
    \end{aligned}.
\end{align}
\end{subequations}
In the previous equations the data are: $\alpha_M^2$ which is the inverse of the
effective normal matrix permeability between the rock matrix and the damage
zone and $\alpha_\Gamma^2$ which is the inverse of the
effective normal matrix permeability between the damage
zone and the fault.
\begin{problem}[Mixed-dimensional model problem]\label{pb:model}
    Find $(\bm{u}, p)$ such that
    \eqref{eq:model} is satisfied.
\end{problem}
\begin{remark}
    In the mixed-dimensional Problem \ref{pb:model} {we have used directly the effective permeabilities, i.e.}
    {the permeabilities already }scaled by the thickness of
    the related object. {They can be related with the parameters of the
    equi-dimensional Problem
    \ref{pb:model_equi} as follows}
    \begin{gather*}
        \sigma_\Omega = \alpha_\Omega, \qquad
        \sigma^2_\mu = \alpha^2_M \epsilon_\mu \bm{n} \otimes \bm{n} +
        \dfrac{\alpha^2_\mu}{\epsilon_\mu}
        (I - \bm{n} \otimes \bm{n}),\qquad
        \sigma^2_\gamma = \alpha^2_\Gamma \epsilon_\gamma \bm{n}_\mu \otimes \bm{n}_\mu +
        \dfrac{\alpha^2_\gamma}{\epsilon_\gamma}
        (I - \bm{n}_\mu \otimes \bm{n}_\mu).
    \end{gather*}
\end{remark}

\section{Weak problem}\label{sec:weak_formulation}

In this section we introduce the functional setting and the weak formulation of
Problem \ref{pb:model}. For simplicity, we consider null flux {essential} boundary conditions
which, through a lifting technique, can be generalized.
In the forthcoming parts we indicate with $\norm{\cdot}_A$ the $L^2$-norm on the
set $A$ and with $(\cdot, \cdot)_A$ the $L^2$-scalar product on the set A.

\subsection{Functional space setting}

Since the previous problem couples unknowns in all the domains, it is natural to
introduce a global functional space to respect this structure. The key
motivation is related to the mixed-dimensional divergence {operators, (\ref{eq:gen_divergence}) that are related to a}
space that generalise the classical (weighted) $H_{\nabla \cdot}-$space in this
framework. For the vector fields, {i.e. $\bm{u}=(\bm{u}_{\Omega}, \bm{u}_{\mu}, \bm{u}_\gamma, u_{\Gamma})$,} we introduce the following
\begin{gather}\label{eq:def_V}
    \begin{aligned}
        V \defeq \{ &\bm{v} = (\bm{v}_\Omega, \bm{v}_\mu, \bm{v}_\gamma, v_\Gamma):\\ \,
        &\alpha_\Omega \bm{v}_\Omega \in L^2(\Omega) \wedge \alpha_M \tr
        \bm{v}_{\Omega} \cdot \bm{n} \in L^2(M) \wedge \nabla_\Omega \cdot
        \bm{v} \in L^2(\Omega) \wedge
        \tr \bm{v}_\Omega \cdot
        \bm{\nu} = 0 \text{ on } \partial \Omega_u,\\
        &\alpha_\mu \bm{v}_\mu \in L^2(\mu) \wedge \nabla_\mu \cdot \bm{v} \in L^2(\mu)
        \wedge \tr \bm{v}_\mu \cdot \bm{\nu} = 0 \text{ on } \partial \mu_u,\\
        &\alpha_{\gamma} \bm{v}_ {\gamma}
        \in L^2(\gamma) \wedge \nabla_{\gamma} \cdot \bm{v} \in L^2(\gamma)
        \wedge \tr \bm{v}_\gamma \cdot \bm{\nu} = 0 \text{ on } \partial
        \gamma_u,\\
        & \alpha_\Gamma v_\Gamma \in L^2(\Gamma) \},
    \end{aligned}
\end{gather}
with the associated norm that make the space $V$ complete, defined as
\begin{gather*}
    \begin{aligned}
        \norm{\bm{v}}_V^2\defeq &\norm{\alpha_\Omega \bm{v}_\Omega}_{\Omega}^2 +
        \norm{\nabla_\Omega \cdot \bm{v}}_{\Omega}^2 + \norm{\alpha_M \tr \bm{v}_\Omega \cdot
        \bm{n}}_{M}^2+
        \norm{\alpha_{\mu} \bm{v}_{\mu}}_{\mu}^2 +
        \norm{\nabla_{\mu} \cdot \bm{v}}_{\mu}^2 +\\
        & \norm{\alpha_{\gamma} \bm{v}_{\gamma}}_{\gamma}^2 +
        \norm{\nabla_\gamma \cdot \bm{v}}_{\gamma}^2+
        \norm{\alpha_\Gamma v_\Gamma}_{\Gamma}^2.
    \end{aligned}
\end{gather*}
We assume that exists $c_\Omega>0$ such that $c_\Omega \leq \alpha_\Omega$ a.e.,
and $\alpha_\Omega \in
L^\infty(\Omega)$. Similarly, we assume that there exists $c_M>0$ such that $c_M \leq
\alpha_M$ a.e., and $\alpha_M \in L^\infty(M)$.
For the damage zone, we require that exists $c_\mu>0$ such that $c_\mu \leq \alpha_{\mu}$
a.e., and also $\alpha_{\mu} \in L^\infty(\mu)$.
Again, we assume that exists $c_\gamma>0$ such that $c_\gamma \leq
\alpha_\gamma$ a.e.,
 and also $\alpha_\gamma \in L^\infty(\gamma)$. Finally, we require
that exists $c_\Gamma > 0$ such that $c_\Gamma \leq \alpha_\Gamma$ a.e.,
and also $\alpha_\Gamma \in L^\infty(\Gamma)$. The extra regularity {required}
for the trace of $\bm{v}_\Omega$ on $M$ is related to {the interface conditions on $M$ which can be seen as a} Robin-type boundary
conditions for a problem in mixed form. For more details see \cite{Roberts1991,Martin2005}.

The functional space for the pressure is defined again on the compound, namely
\begin{gather*}
    Q \defeq \{ v = (v_\Omega, v_\mu, v_\gamma): v_\Omega \in L^2(\Omega), v_\mu
    \in L^2(\mu), v_\gamma \in L^2(\gamma) \},
\end{gather*}
with associated norm that make the space $Q$ complete, given by
\begin{gather*}
    \norm{v}_Q^2 \defeq \norm{v_\Omega}_\Omega^2 + \norm{v_\mu}_\mu^2 +
    \norm{v_\gamma}_\gamma^2.
\end{gather*}

\begin{remark}[Boundary conditions]
    The conditions related to the boundaries in {the definition of } \eqref{eq:def_V} have to be
    interpreted in a proper way. The first condition $\tr \bm{v}_\Omega \cdot\bm{\nu} = 0$
    on $\partial \Omega_u$  {corresponds to}
    \begin{gather*}
        \langle \tr \bm{v}_\Omega \cdot \bm{\nu}, w \rangle_{\partial \Omega_u} = 0
        \quad \text{for all} \quad
        w \in H^{\frac{1}{2}}_{0,0}(\partial \Omega_u)
    \end{gather*}
    with $\langle \cdot, \cdot \rangle_{A}$ the duality pairing defined as
    $\langle \cdot, \cdot \rangle_{A}: H^{-\frac{1}{2}}(A) \times
    H^{\frac{1}{2}}_{0,0}(A) \rightarrow \mathbb{R}$, with $A$ a generic set.
    From standard results we have $\tr \bm{v}_\Omega \cdot \bm{\nu}\in
    H^{-\frac{1}{2}}(\partial \Omega_u)$. In a similar way, the condition $\tr
    \bm{v}_\mu \cdot \bm{\nu} = 0$ on $\partial \mu_u$ means
    \begin{gather*}
        \langle \tr \bm{v}_\mu \cdot \bm{\nu}, w \rangle_{\partial \mu_u} = 0
        \quad \text{for all} \quad
        w \in H^{\frac{1}{2}}_{0,0}(\partial \mu_u).
    \end{gather*}
    In this case, we have $\tr \bm{v}_\mu \cdot \bm{\nu} \in H^{-\frac{1}{2}}(\partial \mu_u)$.
    Finally, the condition  $\tr \bm{v}_\gamma \cdot \bm{\nu} = 0$ on
    $\partial\gamma_u$ is interpreted similarly as
    \begin{gather*}
        \langle \tr \bm{v}_\gamma \cdot \bm{\nu}, w \rangle_{\partial \gamma_u} = 0
        \quad \text{for all} \quad
        w \in H^{\frac{1}{2}}_{0,0}(\partial \gamma_u).
    \end{gather*}
    In this case we have $\tr \bm{v}_\gamma \cdot \bm{\nu} \in H^{-\frac{1}{2}}(\partial \gamma_u)$.
\end{remark}

\begin{remark}
    A more rigorous approach requires the introduction of interpolation
    operators between $\partial \Omega_{\rm int}$ and $M$, as well as between $\mu$ and
    $\Gamma$ and also between $\gamma$ and $\Gamma$. However, to simplify the
    presentation we consider them implicit.
\end{remark}

\subsection{Weak formulation}

We propose now the weak formulation of the previous problem. Again, we respect
the mixed-dimensional nature of Problem \ref{pb:model} by introducing bilinear forms that
consider variables defined on different domains. We first introduce the weighted
$H_{\nabla \cdot}-$mass bilinear form, indicated by $\alpha$ and defined
as $\alpha: V \times V \rightarrow
\mathbb{R}$ such that
\begin{align*}
    \alpha (\bm{u}, \bm{v}) \defeq &
    (\alpha_\Omega \bm{u}_\Omega, \alpha_\Omega \bm{v}_\Omega)_{\Omega} +
    ( \alpha_{M} \tr \bm{u}_\Omega \cdot \bm{n},
    \alpha_M \tr \bm{v}_\Omega \cdot \bm{n})_{M} +(\alpha_{\mu}
    \bm{u}_{\mu}, \alpha_{\mu} \bm{v}_{\mu})_{\mu} +\\
    &
    (\alpha_\gamma \bm{u}_\gamma, \alpha_\gamma \bm{v}_\gamma)_\gamma+
    (\alpha_\Gamma u_\Gamma, \alpha_\Gamma
    v_\Gamma)_\Gamma.
\end{align*}
The bilinear form associated to the conservation statement is given by: $\beta: Q
\times V \rightarrow \mathbb{R}$ such that
\begin{gather*}
    \begin{aligned}
        \beta(p, \bm{v}) \defeq &
        -(p_\Omega, \nabla_\Omega \cdot \bm{v})_{\Omega}
        -(p_{\mu}, \nabla_\mu \cdot \bm{v})_{\mu}
        - (p_\gamma, \nabla_\gamma \cdot \bm{v})_\gamma \\
        =&-(p_\Omega, \nabla \cdot \bm{v}_\Omega)_{\Omega}
        +(p_{\mu}, \tr \bm{v}_\Omega \cdot \bm{n})_{M}
        -(p_{\mu}, \nabla \cdot \bm{v}_\mu)_{\mu}
        - (p_\gamma, \nabla \cdot \bm{v}_\gamma)_\gamma-\\
        &(p_{\mu}, v_\Gamma)_{\Gamma}
        + (p_\gamma, v_\Gamma)_\Gamma.
    \end{aligned}
\end{gather*}
The functionals, which contain the boundary data and source terms, are defined
as: $G: V \rightarrow \mathbb{R}$ and $F:Q \rightarrow \mathbb{R}$ such that
\begin{gather*}
    G(\bm{v}) \defeq -\langle \overline{p_\Omega}, \tr \bm{v}_\Omega \cdot \bm{\nu}
    \rangle_{\partial \Omega_p}
    -\langle \overline{p_\mu}, \tr \bm{v}_\mu \cdot \bm{\nu}
    \rangle_{\partial \mu_p}
    -\langle \overline{p_\gamma}, \tr \bm{v}_\gamma \cdot \bm{\nu}
    \rangle_{\partial \gamma_p}\\
    F(v) \defeq (q_\Omega, v_\Omega)_{\Omega} +
    (q_{\mu},
    v_{\mu})_{\mu} + (q_\gamma, v_\gamma)_\gamma.
\end{gather*}
Where we assume that the pressure boundary data are such that $\overline{p_\Omega}
\in H^{\frac{1}{2}}(\partial \Omega_p)$, $\overline{p_\mu} \in
H^{\frac{1}{2}}(\partial \mu_p)$, and $\overline{p_\gamma} \in H^{\frac{1}{2}}(\partial
\gamma_p)$. We require also that the source terms belong to $(q_\Omega, q_\mu,
q_\gamma) \in Q$.
\begin{problem}[Weak formulation]\label{pb:weak}
    The weak formulation of Problem \ref{pb:model} reads: find $(\bm{u}, p)
    \in V \times Q$ such that
    \begin{gather*}
        \begin{aligned}
            &\alpha(\bm{u}, \bm{v}) + \beta(p, \bm{v}) = G(\bm{v}) && \forall
            \bm{v} \in V\\
            &\beta(v, \bm{u}) = F(v) && \forall v \in Q
        \end{aligned}.
    \end{gather*}
\end{problem}

\subsection{Well posedness}\label{subsec:well_posedness}

In the proof of the well posedness of the problem, some parts are inspired by \cite{Formaggia2012,Fumagalli2016a}.

\begin{theorem}
    Problem \ref{pb:weak} is well posed.
\end{theorem}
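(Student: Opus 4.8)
The plan is to recognize Problem \ref{pb:weak} as a mixed (saddle-point) problem of the form $\alpha(\bm{u},\bm{v}) + \beta(p,\bm{v}) = G(\bm{v})$, $\beta(v,\bm{u}) = F(v)$, and to verify the hypotheses of the Babu\v{s}ka--Brezzi theory on the Hilbert spaces $V$ and $Q$ (which are complete by construction). Since $\alpha$ is symmetric and positive semi-definite, the ingredients required are: continuity of the bilinear forms $\alpha$ and $\beta$ and of the functionals $G$, $F$; coercivity of $\alpha$ on the kernel $\ker\beta \defeq \{\bm{v}\in V : \beta(v,\bm{v}) = 0 \ \forall v\in Q\}$; and the inf-sup (LBB) condition for $\beta$. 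Existence, uniqueness, and continuous dependence on the data then follow from the standard saddle-point theorem.

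Continuity is routine. For $\beta$, Cauchy--Schwarz gives $\abs{\beta(p,\bm{v})} \le \norm{p}_Q \norm{\bm{v}}_V$, since each factor $\nabla_\Upsilon \cdot \bm{v}$ is one of the contributions to $\norm{\bm{v}}_V$. For $\alpha$, applying Cauchy--Schwarz term by term yields $\abs{\alpha(\bm{u},\bm{v})} \le \norm{\bm{u}}_V \norm{\bm{v}}_V$, because every summand of $\alpha$ is a weighted $L^2$ product already measured by the $V$-norm. The functional $F$ is bounded by $\norm{(q_\Omega,q_\mu,q_\gamma)}_Q \norm{v}_Q$, while for $G$ one uses the normal-trace theorem for $H_{\nabla\cdot}$, namely $\norm{\tr\bm{v}_\Upsilon\cdot\bm{\nu}}_{H^{-1/2}} \lesssim \norm{\bm{v}_\Upsilon}_{H_{\nabla\cdot}} \le \norm{\bm{v}}_V$, together with $\overline{p_\Upsilon}\in H^{1/2}$. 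Coercivity on the kernel is then immediate: since $Q = L^2(\Omega)\times L^2(\mu)\times L^2(\gamma)$ separates points, any $\bm{v}\in\ker\beta$ satisfies $\nabla_\Omega\cdot\bm{v} = \nabla_\mu\cdot\bm{v} = \nabla_\gamma\cdot\bm{v} = 0$; hence the three divergence terms in $\norm{\bm{v}}_V^2$ vanish and $\alpha(\bm{v},\bm{v}) = \norm{\bm{v}}_V^2$, i.e. coercivity holds with constant one.

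The heart of the argument, and the step I expect to be the main obstacle, is the inf-sup condition for $\beta$: for every $p\in Q$ I must exhibit $\bm{v}\in V$ with $\beta(p,\bm{v}) \gtrsim \norm{p}_Q^2$ and $\norm{\bm{v}}_V \lesssim \norm{p}_Q$. I would construct such a $\bm{v}$ by prescribing the mixed-dimensional divergences $\nabla_\Omega\cdot\bm{v} = -p_\Omega$, $\nabla_\mu\cdot\bm{v} = -p_\mu$, $\nabla_\gamma\cdot\bm{v} = -p_\gamma$, which gives at once $\beta(p,\bm{v}) = \norm{p}_Q^2$. The delicate point is the interface coupling inside the operators \eqref{eq:gen_divergence}: $\nabla_\mu\cdot\bm{v}$ contains the trace $\tr\bm{v}_\Omega\cdot\bm{n}$ on $M$ and the exchange term $u_\Gamma$, while $\nabla_\gamma\cdot\bm{v}$ contains $u_\Gamma$. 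I would decouple the three layers by setting $v_\Gamma = 0$ and forcing $\tr\bm{v}_\Omega\cdot\bm{n} = 0$ on $M$, so that each equation reduces to the classical problem of finding a field with prescribed ordinary (tangential) divergence. Concretely, in $\Omega$ solve the auxiliary Poisson problem $-\Delta\phi_\Omega = p_\Omega$ with $\phi_\Omega = 0$ on the nonempty set $\partial\Omega_p$ and homogeneous Neumann data on $\partial\Omega_u\cup M$, and set $\bm{v}_\Omega = \nabla\phi_\Omega$; this yields $\nabla\cdot\bm{v}_\Omega = -p_\Omega$, the essential condition $\tr\bm{v}_\Omega\cdot\bm{\nu} = 0$ on $\partial\Omega_u$, the vanishing trace on $M$ (so the extra $L^2(M)$ regularity demanded by $V$ is trivially met), and, by Poincar\'e and elliptic stability, $\norm{\bm{v}_\Omega}_{H_{\nabla\cdot}} \lesssim \norm{p_\Omega}_\Omega$. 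The analogous lower-dimensional problems on $\mu$ and $\gamma$, using $\partial\mu_p\neq\emptyset$ and $\partial\gamma_p\neq\emptyset$, produce $\bm{v}_\mu$ and $\bm{v}_\gamma$ with the same kind of bounds.

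Since the weights $\alpha_\Omega,\alpha_M,\alpha_\mu,\alpha_\gamma,\alpha_\Gamma$ are bounded in $L^\infty$, these $H_{\nabla\cdot}$-bounds transfer to the weighted norms, and collecting the pieces gives $\norm{\bm{v}}_V \le C\norm{p}_Q$ with $C$ independent of $p$; together with $\beta(p,\bm{v}) = \norm{p}_Q^2$ this establishes the inequality $\sup_{\bm{v}\in V} \beta(p,\bm{v})/\norm{\bm{v}}_V \ge C^{-1}\norm{p}_Q$. With all the Brezzi hypotheses in hand, the saddle-point theorem yields a unique $(\bm{u},p)\in V\times Q$ depending continuously on $G$ and $F$. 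The principal difficulties to watch are ensuring that the constructed $\bm{v}$ genuinely lies in $V$ (correct essential boundary conditions and the extra trace regularity on $M$, both secured by the Neumann choice there) and the requirement that the natural-boundary portions $\partial\mu_p$, $\partial\gamma_p$ be nonempty, so that Poincar\'e's inequality is available on the reduced manifolds; the coupling variable $u_\Gamma$ is not needed for surjectivity of the divergence once the layers are decoupled, although it could alternatively be used to absorb net fluxes should a natural-boundary part be absent.
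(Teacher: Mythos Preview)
Your proposal is correct and follows the same Babu\v{s}ka--Brezzi outline as the paper (continuity, coercivity on $\ker\beta$, inf-sup), but your construction for the inf-sup condition is genuinely different---and in fact more elementary. The paper builds the test field $\bm{w}$ by solving \emph{coupled} auxiliary Poisson problems: it imposes the non-homogeneous Neumann datum $-\tr\nabla\varphi_\Omega\cdot\bm{n}=v_\mu$ on $M$, sets $w_\Gamma=v_\mu+v_\gamma$, and then invokes $H^2$ elliptic regularity (citing Dziuk) to control the normal trace in $L^2(M)$ and to bound $\norm{\bm{w}}_V$. Your decoupled construction, taking homogeneous Neumann data on $M$ and $v_\Gamma=0$, avoids any regularity beyond $H^1$ and Poincar\'e; the $L^2(M)$ trace requirement in $V$ is satisfied trivially because the trace vanishes. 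This buys you robustness with respect to the geometry of $\Omega$ (no convexity or smoothness needed), whereas the paper's route needs enough regularity for the $H^2$ estimate to hold.

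One caveat worth flagging in your write-up: when $\Omega$ is disconnected by the fault (which is the typical situation, cf.\ Figure~\ref{fig:domain}), your mixed problem for $\phi_\Omega$ with Neumann data on $M$ is only solvable if $\partial\Omega_p$ meets \emph{every} connected component of $\Omega$; otherwise one component carries a pure Neumann problem with no compatibility guaranteed. The paper's (Dirichlet-on-$M$) variant sidesteps this because $M$ touches every component, at the price of needing the $H^2$ bound. You already note that $u_\Gamma$ could absorb net fluxes if some $\partial\Upsilon_p$ were empty; the analogous fix here would be to allow a non-zero $\tr\bm{v}_\Omega\cdot\bm{n}$ on $M$ carrying the mean of $p_\Omega$ on any orphan component, but it is cleaner simply to add the standing assumption that $\partial\Omega_p$ intersects each component of $\Omega$.
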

\begin{proof}
    The bilinear forms and functionals in  Problem \ref{pb:weak} are linear, so we
    first prove their continuity. For simplicity, we assume that the portions of
    the boundary $\partial \Omega_u$, $\partial \mu_u$, and $\partial \gamma_u$
    are empty. {A lifting technique can be used in the general case.}
    By using Cauchy-Schwarz and triangular
    inequalities we get
    \begin{align*}
        \abs{\alpha(\bm{u}, \bm{v})} \leq &\norm{\alpha_\Omega
        \bm{u}_\Omega}_\Omega
        \norm{\alpha_\Omega \bm{v}_\Omega}_\Omega + \norm{\alpha_{M} \tr \bm{u}_\Omega
        \cdot \bm{n}}_M \norm{\alpha_M \tr \bm{v}_\Omega \cdot \bm{n}}_M +
        \norm{\alpha_{\mu}\bm{u}_{\mu}}_\mu \norm{\alpha_{\mu}
        \bm{v}_{\mu}}_\mu+\\
        &\norm{\alpha_\gamma \bm{u}_\gamma}_\gamma
        \norm{\alpha_\gamma \bm{v}_\gamma}_\gamma+\norm{\alpha_\Gamma
        u_\Gamma}_\Gamma\norm{\alpha_\Gamma v_\Gamma}_\Gamma \leq
        \norm{\bm{u}}_V \norm{\bm{v}}_V
    \end{align*}
    as well as for the $\beta$ bilinear form
    \begin{align*}
        \abs{\beta(p, \bm{v})} \leq & \norm{p_\Omega}_\Omega \norm{\nabla_\Omega
        \cdot \bm{v}}_\Omega + \norm{p_\mu}_\mu \norm{\nabla_\mu \cdot
        \bm{v}}_\mu + \norm{p_\gamma}_\gamma \norm{\nabla_\gamma \cdot
        \bm{v}}_\gamma\leq \norm{p}_Q \norm{\bm{v}}_V.
    \end{align*}
    For the functionals, we consider in addition the trace inequality associated to the
    $H_{\nabla \cdot}-$spaces. We obtain
    \begin{align*}
        \abs{G(\bm{v})} \leq &
        \norm{\overline{p_\Omega}}_{H^{\frac{1}{2}}(\partial \Omega_p)}
        \norm{\tr \bm{v}_\Omega \cdot \bm{\nu}}_{H^{-\frac{1}{2}}(\partial
        \Omega_p)}+\norm{\overline{p_\mu}}_{H^{\frac{1}{2}}(\partial \mu_p)}
        \norm{\tr \bm{v}_\mu \cdot
        \bm{\nu}}_{H^{-\frac{1}{2}}(\partial\mu_p)}+\\
        &\norm{\overline{p_\gamma}}_{H^{\frac{1}{2}}(\partial \gamma_p)}
        \norm{\tr \bm{v}_\gamma \cdot \bm{\nu}}_{H^{-\frac{1}{2}}(\partial
        \gamma_p)} \leq c_G \norm{\bm{v}}_V\\
        \abs{F(v)} \leq & \norm{q_\Omega}_\Omega \norm{v_\Omega}_\Omega+
        \norm{q_\mu}_\mu \norm{v_\mu}_\mu+
        \norm{q_\gamma}_\gamma \norm{v_\gamma}_\gamma \leq c_F \norm{v}_Q,
    \end{align*}
    with the constants
    \begin{gather*}
        c_G = \max\left\{\norm{\overline{p_\Omega}}_{H^{\frac{1}{2}}(\partial \Omega_p)},
        \norm{\overline{p_\mu}}_{H^{\frac{1}{2}}(\partial \mu_p)},
        \norm{\overline{p_\gamma}}_{H^{\frac{1}{2}}(\partial
        \gamma_p)}\right\},\\
        c_F = \max\left\{ \norm{q_\Omega}_\Omega, \norm{q_\mu}_\mu,
        \norm{q_\gamma}_\gamma\right\}.
    \end{gather*}
    Now, we move on to proving the coercivity of $\alpha$ on the kernel of
    $\beta$.
    Considering a function $\bm{w} \in V$ such that $\beta(v, \bm{w}) = 0$ for
    all $v \in Q$, we have by a particular choice of $v$ the following results
    \begin{align*}
        &v = (v_\Omega, 0, 0) &&\Rightarrow&& (v_\Omega, \nabla_\Omega \cdot
        \bm{w})_\Omega
        = 0 \quad \forall v_\Omega \in L^2(\Omega) &&\Rightarrow&& \nabla_\Omega \cdot \bm{w} = 0 \text{ a.e. in }
        L^2(\Omega)\\
        &v = (0, v_\mu, 0) &&\Rightarrow&& (v_\mu, \nabla_\mu \cdot \bm{w})_\mu
        = 0 \quad \forall v_\mu \in L^2(\mu) &&\Rightarrow&& \nabla_\mu \cdot \bm{w} = 0 \text{ a.e. in }
        L^2(\mu)\\
        &v = (0, 0, v_\gamma) &&\Rightarrow&& (v_\gamma, \nabla_\gamma \cdot
        \bm{w})_\gamma
        = 0 \quad \forall v_\gamma \in L^2(\gamma) &&\Rightarrow&& \nabla_\gamma \cdot \bm{w} = 0 \text{ a.e. in }
        L^2(\gamma)
    \end{align*}
    thus the norm of $\bm{w}$ simplifies to
    \begin{gather*}
        \norm{\bm{w}}_V^2 = \norm{\alpha_\Omega \bm{v}_\Omega}_{\Omega}^2+
        \norm{\alpha_M \tr \bm{v}_\Omega \cdot\bm{n}}_{M}^2+
        \norm{\alpha_{\mu} \bm{v}_{\mu}}_{\mu}^2 +
        \norm{\alpha_{\gamma} \bm{v}_{\gamma}}_{\gamma}^2 +
        \norm{\alpha_\Gamma v_\Gamma}_{\Gamma}^2.
    \end{gather*}
    We can prove the coercivity of $\alpha$ on the kernel of $\beta$, by simply
    notice that $\alpha(\bm{w}, \bm{w}) = \norm{\bm{w}}_V^2$.

    To prove the inf-sup condition, given a function $v \in Q$, we introduce the following auxiliary problems
    \begin{gather*}
        \begin{aligned}
            &-\nabla \cdot \nabla \varphi_\Omega = v_\Omega && \text{in } \Omega\\
            &-\tr \nabla \varphi_\Omega \cdot \bm{n} = v_\mu && \text{on } M
            \\
            &\tr\varphi_\Omega = 0 && \text{on } M\\
            &\tr \varphi_\Omega = 0 && \text{on } \partial \Omega_p
        \end{aligned}
        \qquad\qquad
        \begin{aligned}
            &- \nabla \cdot \nabla \varphi_\mu = v_\mu + v_\gamma && \text{in }
            \mu\\
            &\varphi_\mu = 0 && \text{on } \partial \mu_p\\
            &- \nabla \cdot \nabla \varphi_\gamma = - v_\mu && \text{in }
            \gamma\\
            &\varphi_\gamma = 0 && \text{on } \partial \gamma_p
        \end{aligned}
    \end{gather*}
    while for the intersection $\Gamma$ we have $\varphi_\Gamma = v_\mu +
    v_\gamma$. By assuming that the domains are regular enough, from \cite{Dziuk1988} an elliptic regularity
    result can be used giving
    \begin{gather*}
        \varphi_\Omega \in H^2_*(\Omega)
        \quad \text{with} \quad
        \norm{\varphi_\Omega}_{H^2_*(\Omega)} \leq \norm{v_\Omega}_\Omega +
        \norm{v_\mu}_\mu,\\
        \varphi_\mu \in H^2(\mu)
        \quad \text{with} \quad
        \norm{\varphi_\mu}_{H^2(\mu)} \leq \norm{v_\mu}_\mu +
        \norm{v_\gamma}_\gamma,\\
        \varphi_\gamma \in H^2(\gamma)
        \quad \text{with} \quad
        \norm{\varphi_\gamma}_{H^2(\gamma)} \leq
        \norm{v_\mu}_\mu.
    \end{gather*}
    The space $H^2_*(\Omega)$ is the broken $H^2-$space defined on each
    connected component of $\Omega$ (e.g. the left and right part in Figure \ref{fig:domain}), its norm is defined coherently. We clearly
    have $\norm{\varphi_\Gamma}_\Gamma \leq \norm{v_\mu}_\mu +
    \norm{v_\gamma}_\gamma$.
    By considering the function $\bm{w} \in V$ such that
    \begin{gather*}
        \bm{w} = (\bm{w}_\Omega, \bm{w}_\mu, \bm{w}_\gamma, {w}_\Gamma) =
        (\nabla \varphi_\Omega, \nabla \varphi_\mu, \nabla \varphi_\gamma,
        \varphi_\Gamma),
    \end{gather*}
    we obtain that $- \nabla \cdot \bm{w}_\Omega = v_\Omega$
    and $\tr \bm{w}_\Omega \cdot \bm{n} = v_\mu$. For $\bm{w}_\mu$ and  $\bm{w}_\gamma$ we get
    that $- \nabla \cdot \bm{w}_\mu = v_\mu + v_\gamma$
    and $-\nabla \cdot \bm{w}_\gamma = - v_\mu$, respectively. Finally, we have $w_\Gamma =
    v_\gamma + v_\mu$. This gives the following expressions for the
    mixed-dimensional divergences $\nabla_\Omega \cdot \bm{w} = -v_\Omega$,
    $\nabla_\mu \cdot \bm{w} = -v_\mu$, and $\nabla_\gamma \cdot \bm{w} =-
    v_\gamma$.
    The $V-$norm of $\bm{w}$ can be bound by the
    $Q-$norm of $v$, in fact
    \begin{gather*}
        \norm{\bm{w}}_V^2 = \norm{\nabla \varphi_\Omega}^2_\Omega + \norm{\nabla
        \varphi_\mu}_\mu^2 + \norm{\nabla \varphi_\gamma}_\gamma^2 +
        \norm{\varphi_\Gamma}_\Gamma^2 + \norm{v_\Omega}_\Omega^2 +
        2\norm{v_\mu}_\mu^2+\norm{v_\gamma}_\gamma^2 \leq \norm{v}_Q^2.
    \end{gather*}
    Finally we obtain the boundedness of $\beta$ from below with this choice of
    $\bm{w}$, namely
    \begin{gather*}
        \beta(v, \bm{w}) = \norm{v_\Omega}_\Omega^2 + \norm{v_\mu}_\mu^2 +
        \norm{v_\gamma}_\gamma^2 = \norm{v}_Q^2 \geq \norm{v}_Q \norm{\bm{w}}_V.
    \end{gather*}
    Thus the inf-sup condition is fulfilled, and following \cite{Brezzi1991} we conclude that
    Problem \ref{pb:weak} is well posed.
\end{proof}

\section{Numerical discretization}\label{sec:numerical_discretization}

To keep the mixed nature of Problem \ref{pb:weak} in the numerical
approximation, we consider the lowest order Raviart-Thomas-N\'ed\'elec $\mathbb{RT}_0$ finite element for
the Darcy velocity and piecewise constant $\mathbb{P}_0$ for the pressure in the
domains $\Omega$, $\mu$, and $\gamma$. The choice of the pair $\mathbb{RT}_0-\mathbb{P}_0$
is also motivated by their local mass conservation property  and consistency with the
functional spaces considered in the weak formulation.

We introduce a family of simplicial meshes
approximations of the domains $\Omega$, $\mu$, $\gamma$, $\Gamma$
respectively, which will be indicated with the same symbol. By assuming a
matching discretization of the meshes
at the interfaces, the approximation of the functional spaces are
defined as
\begin{gather*}
    V_h \defeq \mathbb{RT}_0(\Omega) \times \mathbb{RT}_0(\mu) \times
    \mathbb{RT}_0(\gamma) \times \mathbb{P}_0(\Gamma) \subset V\\
    Q_h \defeq \mathbb{P}_0(\Omega) \times \mathbb{P}_0(\mu) \times
    \mathbb{P}_0(\gamma) \subset Q,
\end{gather*}
we introduce a set of base functions for the discrete spaces, such that
\begin{gather*}
    \mathbb{RT}_0(\Omega) = \spanop_{i \in \dof
    \mathbb{RT}_0({\Omega})} \{
    \bm{\zeta}_{\Omega, i} \}
    \quad\text{and}\quad
    \mathbb{P}_0(\Omega) = \spanop_{i\in \dof
    \mathbb{P}_0(\Omega)} \{ {\xi}_{\Omega, i}
    \}\\
    \mathbb{RT}_0(\mu) = \spanop_{i\in \dof \mathbb{RT}_0({\mu})} \{ \bm{\zeta}_{\mu, i}
    \}
    \quad\text{and}\quad
    \mathbb{P}_0(\mu) = \spanop_{i \in \dof \mathbb{P}_0(\mu)} \{
    {\xi}_{\mu, i} \}
    \\
    \mathbb{RT}_0(\gamma) = \spanop_{i \in \dof
    \mathbb{RT}_0({\gamma})} \{ \bm{\zeta}_{\gamma, i} \}
    \quad\text{and}\quad
    \mathbb{P}_0(\gamma) = \spanop_{i \in \dof
    \mathbb{P}_0(\gamma)} \{
    {\xi}_{\gamma, i} \}\\
    \mathbb{P}_0(\Gamma) = \spanop_{i \in \dof
    \mathbb{P}_0(\Gamma)} \{
    {\zeta}_{\Gamma, i} \}.
\end{gather*}
We assume that quadrature is performed exactly.
We indicate with $h$ the global mesh size of the discretization.

\subsection{Matrix formulation} \label{subsec:matrix_form}

Given the choice of the discrete spaces, we can recast the weak formulation of
the problem in term of a block matrix system. We introduce the block matrices
related to the mass matrices
\begin{gather*}
    [A_\Omega]_{i,j} \defeq (\alpha_\Omega \bm{\zeta}_{\Omega, i}, \alpha_\Omega
    \bm{\zeta}_{\Omega, j})_{\Omega} +
    ( \alpha_{M} \tr \bm{\zeta}_{\Omega, i} \cdot \bm{n},
    \alpha_M \tr \bm{\zeta}_{\Omega, j} \cdot \bm{n})_{M}\\
    [A_\mu]_{i, j} \defeq (\alpha_{\mu}
    \bm{\zeta}_{\mu, i}, \alpha_{\mu} \bm{\zeta}_{\mu, j})_{\mu}
    \quad
    [A_\gamma]_{i, j } \defeq (\alpha_\gamma \bm{\zeta}_{\gamma, i},
    \alpha_\gamma \bm{\zeta}_{\gamma, j})_\gamma
    \quad
    [A_\Gamma]_{i, j} \defeq (\alpha_\Gamma \zeta_{\Gamma, i}, \alpha_\Gamma
    \zeta_{\Gamma, j})_\Gamma,
\end{gather*}
the matrices associated with the conservation statement in each domain are
defined as
\begin{gather*}
    [B_\Omega]_{i, j} \defeq -(\xi_{\Omega,j} , \nabla \cdot \bm{\zeta}_{\Omega,
    i})_{\Omega}\quad
    [B_\mu]_{i, j} \defeq -(\xi_{\mu, j}, \nabla \cdot \bm{\zeta}_{\mu, i})_{\mu}
    \quad
    [B_\gamma]_{i, j} \defeq - (\xi_{\gamma, j}, \nabla \cdot
    \bm{\zeta}_{\gamma, i})_\gamma,
\end{gather*}
while between the domains the coupling conditions are associated with the
matrices
\begin{gather*}
    [G_\Omega]_{i, j} \defeq - (\xi_{\mu, j}, \tr \bm{\zeta}_{\Omega, i} \cdot \bm{n})_M
    \quad
    [G_\mu]_{i, j} \defeq -(\xi_{\mu, i}, \zeta_{\gamma, j})_\Gamma
    \quad
    [G_\gamma]_{i, j} \defeq (\xi_{\gamma, i}, \zeta_{\gamma, j})_\Gamma.
\end{gather*}
We indicate with $\mathfrak{u}$ and $\mathfrak{p}$ the values of degrees of
freedom associated with the Darcy
velocity and pressure. We make use of a similar notation to indicate the source and
boundary terms. The linear system
associated with Problem \ref{pb:weak} is
\begin{gather}\label{eq:matrix_form}
    \begin{bmatrix}
        A_\Omega & B_\Omega & & G_\Omega& & & \\
        B_\Omega^{\top} & & & & & & \\
        & & A_{\mu} & B_{\mu} & & & \\
        G_\Omega^\top& & B_{\mu}^\top & & & & G_\mu\\
        & & & & A_{\gamma} & B_{\gamma} & \\
        & & & & B_{\gamma}^T & & G_\gamma\\
        & & & G_\mu^\top& & G_\gamma^\top& A_\Gamma
    \end{bmatrix}
    \begin{bmatrix}
        \mathfrak{u}_{\Omega}\\
        \mathfrak{p}_{\Omega}\\
        \mathfrak{u}_{\mu}\\
        \mathfrak{p}_{\mu}\\
        \mathfrak{u}_{\gamma}\\
        \mathfrak{p}_{\gamma}\\
        \mathfrak{u}_{\Gamma}
    \end{bmatrix}
    =
    \begin{bmatrix}
        \mathfrak{g}_\Omega\\
        \mathfrak{f}_\Omega\\
        \mathfrak{g}_\mu\\
        \mathfrak{f}_\mu\\
        \mathfrak{g}_\gamma\\
        \mathfrak{f}_\gamma\\
        \phantom{0}
    \end{bmatrix}.
\end{gather}

where we have avoided explicitly writing empty matrices.
Since the bilinear forms associated to the matrices $A_\Omega$, $A_\mu$,
$A_\gamma$, $A_\Gamma$ are symmetric then also these matrices are symmetric,
resulting in a symmetric global system with a saddle-point structure.
We can introduce the following problem.
\begin{problem}[Matrix formulation]\label{pb:matrix_form}
    The matrix formulation of Problem \ref{pb:weak} is: find $(\mathfrak{u},
    \mathfrak{p})$ such that \eqref{eq:matrix_form} is satisfied.
\end{problem}
To recast
the previous system in terms of the pressures alone, we introduce the matrices
\begin{gather*}
    S_\Omega \defeq B_\Omega^{\top}A_\Omega^{-1} B_\Omega \quad
    S_\mu \defeq B_{\mu}^\top A_{\mu}^{-1} B_{\mu} + G_\Omega^\top A_\Omega^{-1}
    G_\Omega + G_\mu A_\Gamma^{-1} G_\mu^\top\\
    S_\gamma \defeq B_{\gamma}^T A_{\gamma}^{-1} B_{\gamma} + G_\gamma A_\Gamma^{-1}
    G_\gamma^\top\quad
    C_\Omega \defeq B_\Omega^{\top}A_\Omega^{-1}G_\Omega \quad
    C_\mu \defeq G_\mu^\top A_\Gamma^{-1} G_\gamma^\top,
\end{gather*}
where the matrices $A_\Omega$, $A_\mu$, $A_\gamma$, and $A_\Gamma$ are invertible
since they are a Raviart-Thomas-N\'ed\'elec approximation of the $H_{\nabla
\cdot}-$mass bilinear forms and $\partial \Omega_p \neq \emptyset$, $\partial
\mu_p \neq \emptyset$, and $\partial \gamma_p \neq \emptyset$, while $A_\Gamma$
is invertible by construction. The previous matrices are thus well defined
and $S_\Omega$, $S_\mu$, $S_\gamma$ are also symmetric and positive definite.
We introduce the vectors
\begin{gather*}
    \mathfrak{r}_\Omega \defeq - \mathfrak{f}_\Omega + B_\Omega^{\top} A_\Omega^{-1}
    \mathfrak{g}_\Omega\quad
    \mathfrak{r}_\mu \defeq - \mathfrak{f}_\mu + B_{\mu}^\top  A_{\mu}^{-1}
    \mathfrak{g}_\mu + G_\Omega^\top A_\Omega^{-1} \mathfrak{g}_\Omega\\
    \mathfrak{r}_\gamma \defeq - \mathfrak{f}_\gamma +  B_{\gamma}^T
    A_{\gamma}^{-1} \mathfrak{g}_\gamma,
\end{gather*}
which are again well defined. The system in terms of pressure can be written as
\begin{gather} \label{eq:linear_system_pressure}
    \begin{bmatrix}
        S_\Omega & C_\Omega & \\
        C_\Omega^\top & S_\mu & C_\mu \\
        & C_\mu^\top & S_\gamma
    \end{bmatrix}
    \begin{bmatrix}
        \mathfrak{p}_{\Omega}\\
        \mathfrak{p}_{\mu}\\
        \mathfrak{p}_{\gamma}
    \end{bmatrix}
    =
    \begin{bmatrix}
        \mathfrak{r}_\Omega\\
        \mathfrak{r}_\mu\\
        \mathfrak{r}_\gamma
    \end{bmatrix}.
\end{gather}
\begin{problem}[Pressure matrix formulation]
    The pressure matrix formulation of Problem \ref{pb:weak} is: find
    $\mathfrak{p}$ such that \eqref{eq:linear_system_pressure} is satisfied.
\end{problem}
To numerically solve the problem, it is possible to use the equivalent
formulations of Problem
\ref{pb:matrix_form} or Problem \ref{eq:linear_system_pressure}.

\section{Numerical examples}\label{sec:numerical_examples}

In this part we present some numerical examples to show different aspects of the
mathematical model introduced previously. In particular, {we focus on its
behavior in the presence of} high contrasts in permeability among the rock
matrix, damage zone and fault. We present also the model error, {i.e. the
error introduced by the geometrical reduction of the three layers} and discuss
the obtained results. Finally, we show the applicability in a three-dimensional
domain.

All the test cases are {implemented} in the library PorePy \cite{Keilegavlen2017a}.
The current implementation in the code considers a mortar variable on each
interface and it is thus capable to
handle non-matching discretization. Also, the code is agnostic with respect to
the numerical scheme adopted in each domain. However, to keep the presentation
simple and coherent with the previous sections, we will consider matching grids
and  Raviart-Thomas-N\'ed\'elec {finite element of the lowest order for the numerical} approximation.

\subsection{Example 1}\label{subsec:example1}

This first {set of numerical tests} is divided in two parts. In the first we present the effect of
permeability heterogeneity in the fault and damage zone on the solution.
This test is inspired by the examples presented in \cite{Martin2005,Frih2011}.
We study the case of high permeability, low permeability, and a
mixed case. The aim is to present the potentialities of the introduced model in
the Sub-subsection \ref{subsubsec:example1_1}. In
the second part, Sub-subsection \ref{subsubsec:example1_2}, the model error
is studied {comparing the mixed-dimensional solution with the equi-dimensional one where
a full Darcy problem is solved on a computational grid refined enough to resolve the aperture
of both the damage zone and the fault.}

In all the cases we consider a fixed geometry and boundary conditions on the
rock matrix. The rock matrix {occupies} the domain $\Omega = (0, 1) \times (0, 1) \cup
(1, 2) \times (0, 1)$, while the damage zone and fault are identified as $\mu =
\{1\} \times (0, 1)$ and $\gamma = \{1\} \times (0, 1)$, {respectively}. We set $\alpha_\Omega =
1$ and pressure boundary condition on the left and right of $\Omega$, with
values 0 and 1, respectively. On the remaining portions of the boundary we impose
zero flux. The computational domain is represented in Figure \ref{fig:domain}. The grid is
composed of $\sim 8.5k$ triangles for $\Omega$, $80$ segments for $\mu$,
$40$ segments for $\gamma$, and $40$ segments for $\Gamma$.

We consider three different sub-cases, depending on the value of $\alpha$ in the
damage zone and fault. In all the cases, we set $\epsilon = 10^{-2}$. In the
\textit{case (i)} we have $\alpha_\mu^2 = 10^2\epsilon$ and $\alpha_M^2 =
10^2/\epsilon$ for the damage zone, and $\alpha_\gamma^2 = 10^2\epsilon$ and
$\alpha_\Gamma^2 = 10^2/\epsilon$ for the fault. For the \textit{case (ii)}, the
parameters are chosen as $\alpha_\mu^2 = k\epsilon$ and $\alpha_M^2 =
k/\epsilon$ for the damage zone, and $\alpha_\gamma^2 = k\epsilon$ and
$\alpha_\Gamma^2 = k/\epsilon$ for the fault, where $k$ is given by
\begin{gather*}
    k(y) =
    \begin{dcases*}
        1 & if $y<0.25$ or $y>0.75$\\
        2\cdot 10^{-3} & if $0.25 \leq y \leq 0.75$.
    \end{dcases*}
\end{gather*}
Finally, in \textit{case (iii)} we impose the values of $\alpha_\gamma^2 = k
\epsilon$ and
$\alpha_\Gamma^2 = k / \epsilon$ for the fault. The damage zone is divided
into its left and right parts, on the former we have $\alpha_\mu^2 = k \epsilon$
and $\alpha_M^2 = k / \epsilon$ while on the latter $\alpha_\mu^2 = 10^2
\epsilon$ and $\alpha_M^2 = 10^2 / \epsilon$.
In \textit{case (i)} the lower dimensional objects are highly conductive, in
\textit{case (ii)} are heterogeneous in space and less conductive in the central part, and
in \textit{case (iii)} the damage zone is heterogeneous in space and asymmetric.

In \textit{case (i)} we impose unitary pressure at damage zone and fault tips $y=1$,
while zero pressure on the other side. For \textit{case (ii)} and \textit{case
(iii)} zero flux is imposed on the boundary of the damage zone and fault.

\subsubsection{Permeability contrast}\label{subsubsec:example1_1}

In this first section, we present the numerical results obtained from the reduced
model in the three different cases. In Figure \ref{fig:example1_case1} the
results from \textit{case (i)} are shown.
\begin{figure}
    \centering
    \includegraphics[width=0.55\textwidth]{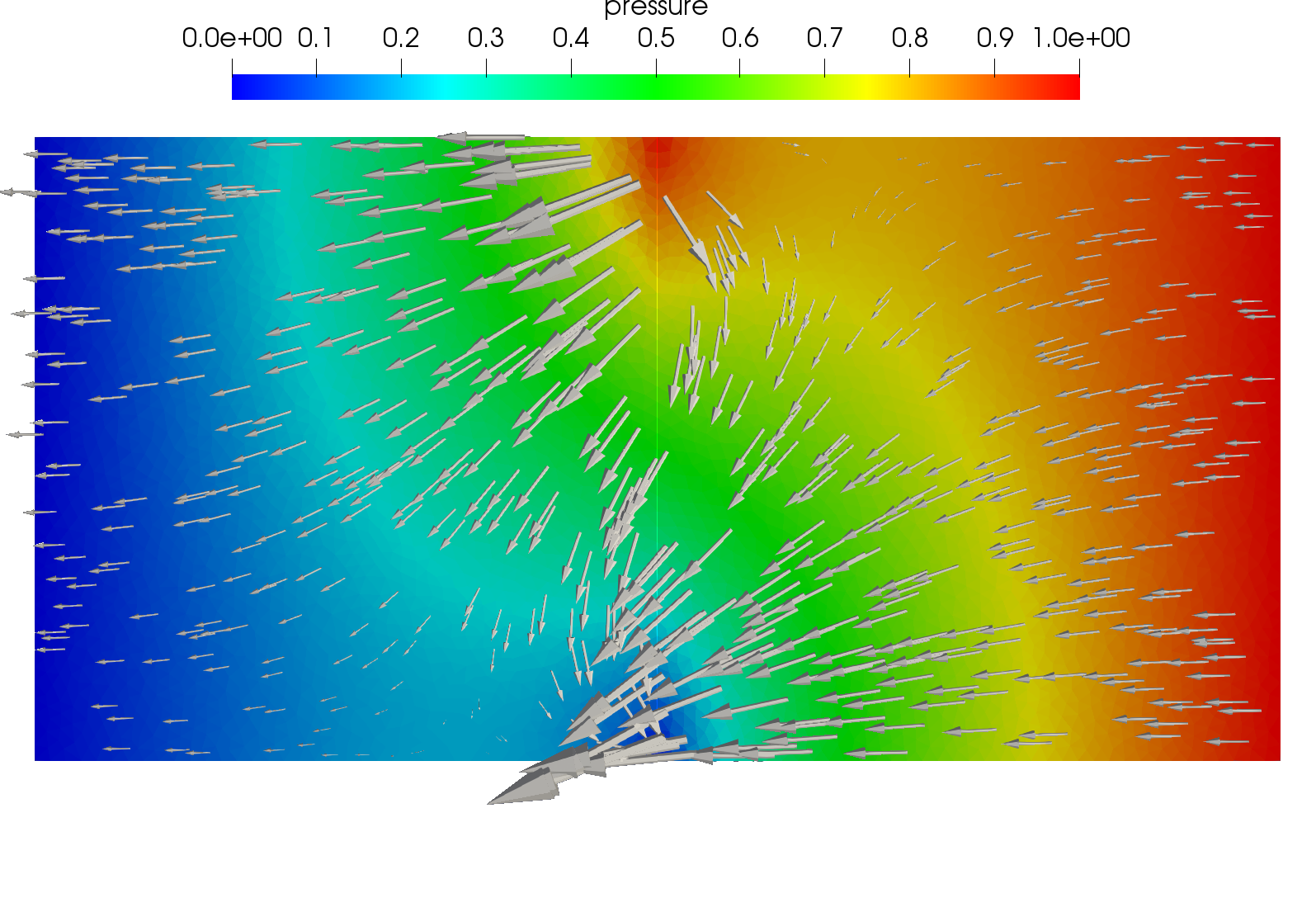}%
    \hspace*{0.1\textwidth}%
    \includegraphics[width=0.25\textwidth]{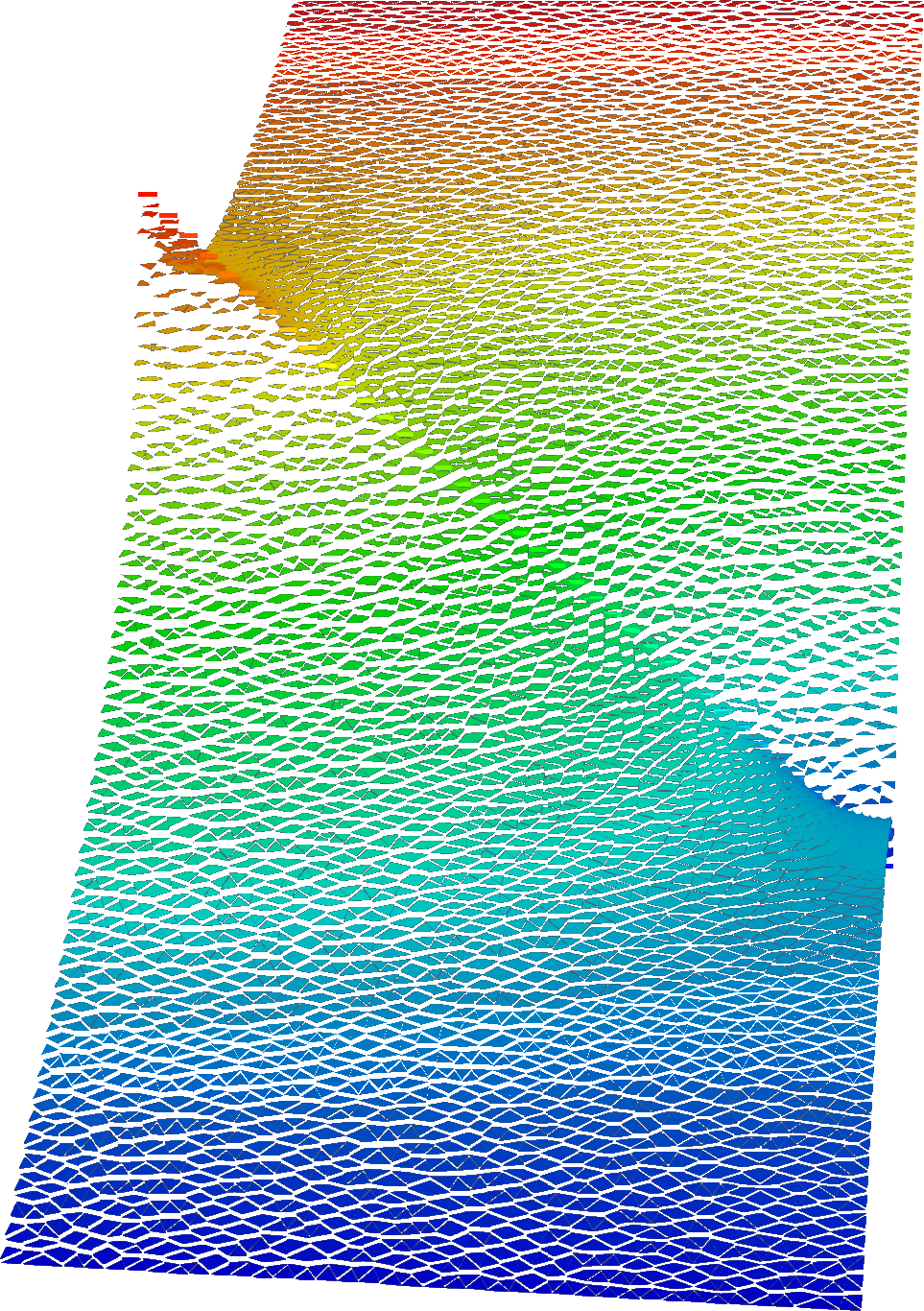}
    \caption{Graphical representation of the solution of \textit{case (i)}. On
    the left the pressure field and the Darcy velocity, the latter shown only
    for few cells. On the right the pressure field warped and rotated.}
    \label{fig:example1_case1}
\end{figure}
Due to the high value of $\alpha_\mu$, $\alpha_M$, $\alpha_\gamma$, and
$\alpha_\Gamma$ the pressure profile is smooth (continuous) across $\mu$ and $\gamma$. The
impact of these lower dimensional objects {on the flow} is still quite {remarkable}, due to the
 type of boundary conditions {and the permeability contrast}.

The solution for the \textit{case (ii)} is given in Figure
\ref{fig:example1_case2}.
\begin{figure}
    \centering
    \includegraphics[width=0.55\textwidth]{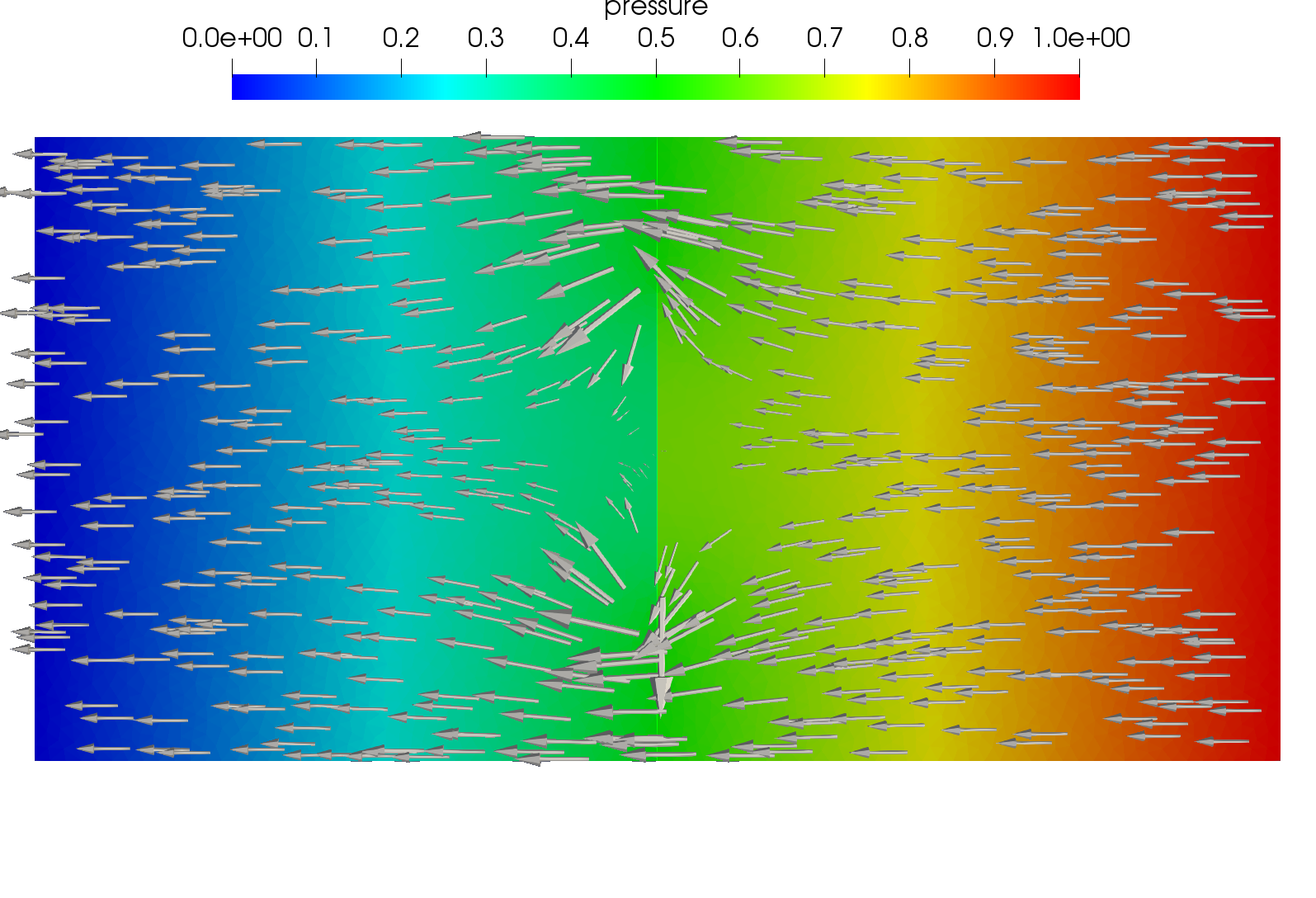}%
    \hspace*{0.1\textwidth}%
    \includegraphics[width=0.25\textwidth]{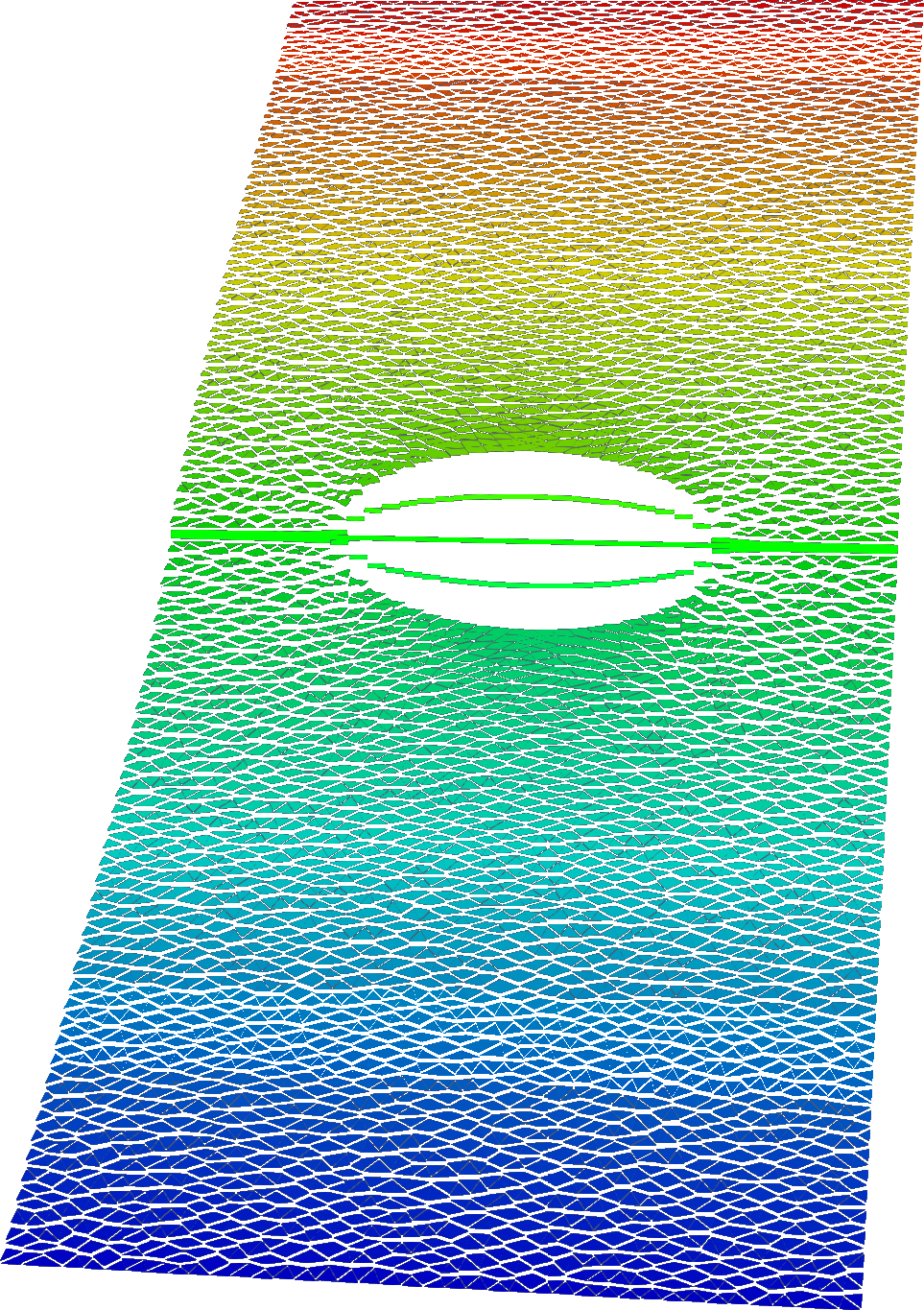}
    \caption{Graphical representation of the solution of \textit{case (ii)}. On
    the left the pressure field and the Darcy velocity, the latter shown only
    for few cells. On the right the pressure field warped and rotated.}
    \label{fig:example1_case2}
\end{figure}
The impact of the low value of the {permeabilities} in the central parts of $\mu$ and
$\gamma$ is evident. We have a pressure jump between the rock matrix and the
damage zone, and between the damage zone and the fault. The pressure in the latter is {constant}
due to the symmetry of the problem. {The flow tends to focus around these less
permeable regions}.

Figure \ref{fig:example1_case3} shows the results from the \textit{case (iii)}.
\begin{figure}
    \centering
    \includegraphics[width=0.55\textwidth]{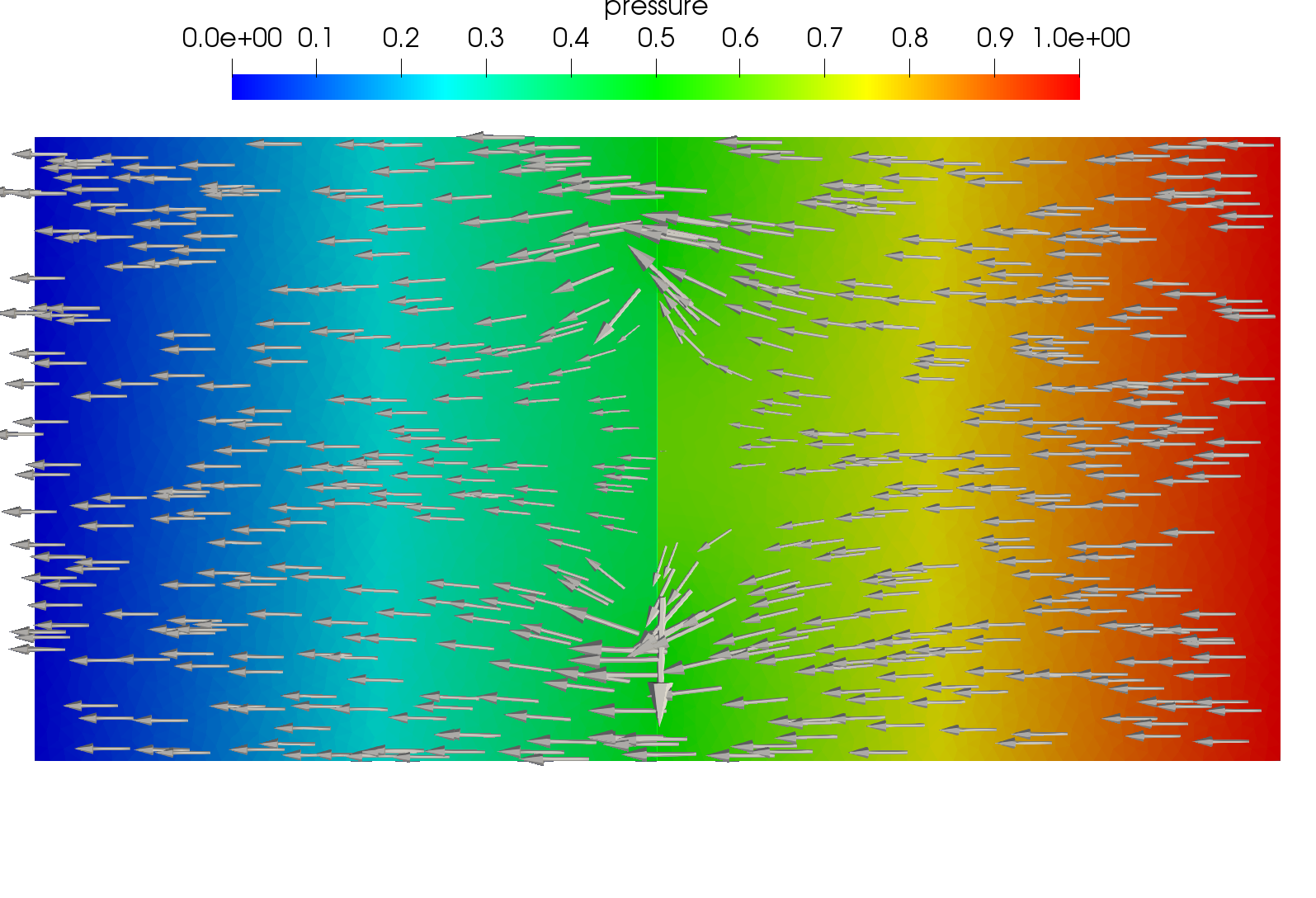}%
    \hspace*{0.1\textwidth}%
    \includegraphics[width=0.25\textwidth]{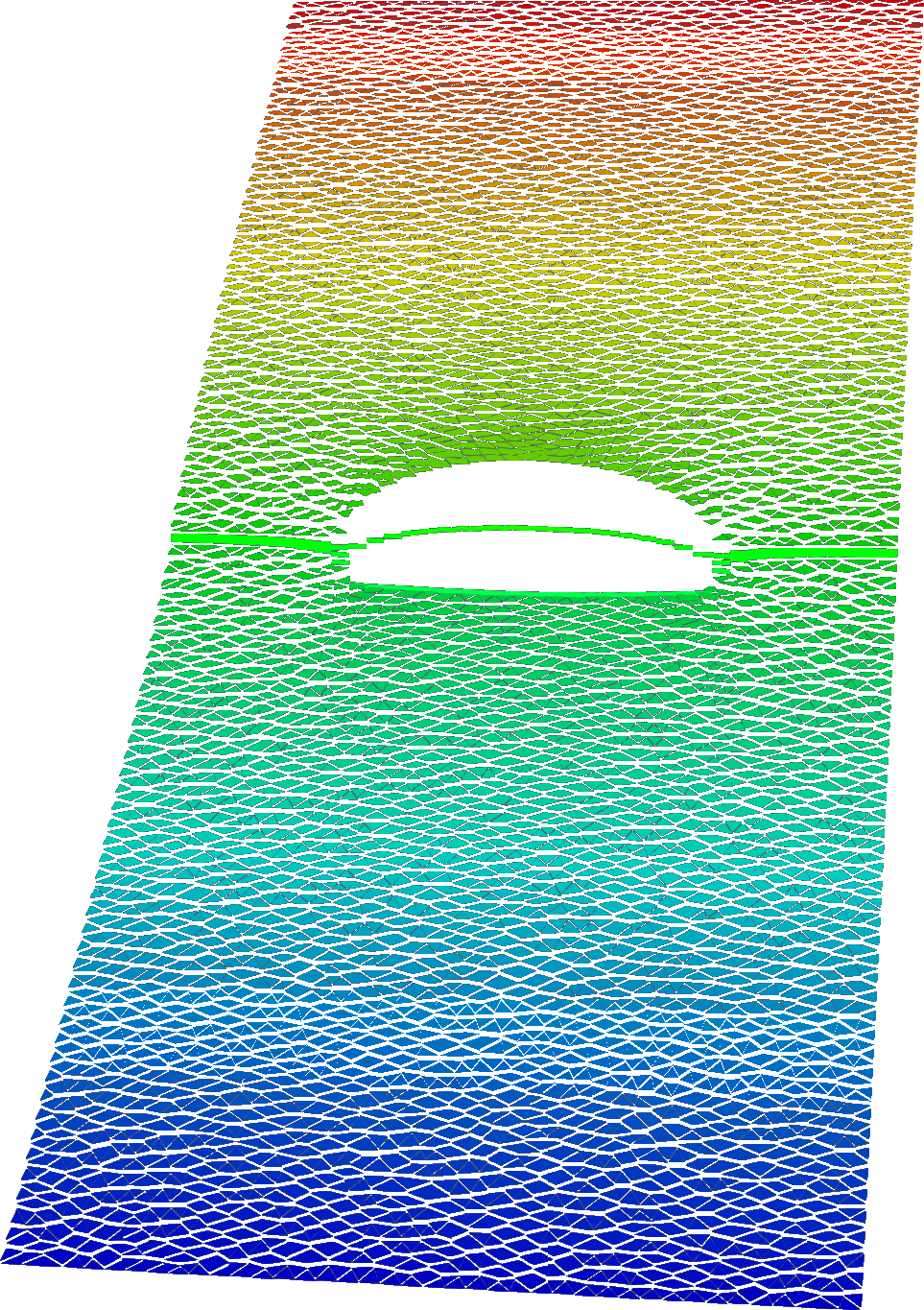}
    \caption{Graphical representation of the solution of \textit{case (iii)}. On
    the left the pressure field and the Darcy velocity, the latter shown only
    for few cells. On the right the pressure field warped and rotated.}
    \label{fig:example1_case3}
\end{figure}
In this test a side of $\mu$ has a portion with low $\alpha_\mu$ and $\alpha_M$,
while on the other side these coefficients have hight values. The pressure solution
exhibit thus a jump between the rock matrix and the first side of $\mu$, and
with the latter and the fault. However, {on the other side} the solution behaves similarly to \textit{case (i)} and we
obtain a smooth (continuous) profile among the fault and the second part of
$\mu$ and the fault. The Darcy velocity tends to avoid the low permeable part of $\mu$,
but not the high permeable one.

{The test cases above demonstrate the capability of the model to handle different combinations of the model parameters.}
The obtained solutions are physically sound and show the capability and
potentiality of the model.

\subsubsection{Model error}\label{subsubsec:example1_2}

In this section we discuss the error associated with the geometrical reduction
of the fault and damage zone, which are modeled as $n-1$ dimensional
interfaces even if, physically, they are $n$ dimensional regions as $\Omega$. To
estimate this error we will compare the solutions provided by the
mixed-dimensional model with the numerical solution of a traditional full Darcy
problem set on a domain with heterogeneous permeabilities, discretized with a
grid that is able to resolve the actual aperture of the layers. The
equi-dimensional solution is computed with the same pair of mixed finite
elements as the mixed-dimensional one. Let $p_\mathrm{equi}^\eta$ be the
numerical solution of the equi-dimensional problem on a grid $\mathcal{T}_\eta$
of size $\eta$. Let $I^\eta(p^h_\Omega)$ be the interpolation of $p^h_\Omega$ on
$\mathcal{T}_\eta$: in Figure \ref{fig:example1_case1_model_error} we show the
difference $I^\eta(p^h_\Omega)-p_\mathrm{equi}^\eta$ for the three cases
presented in the previous section.
\begin{figure}
    \centering
    \includegraphics[width=0.5\textwidth]{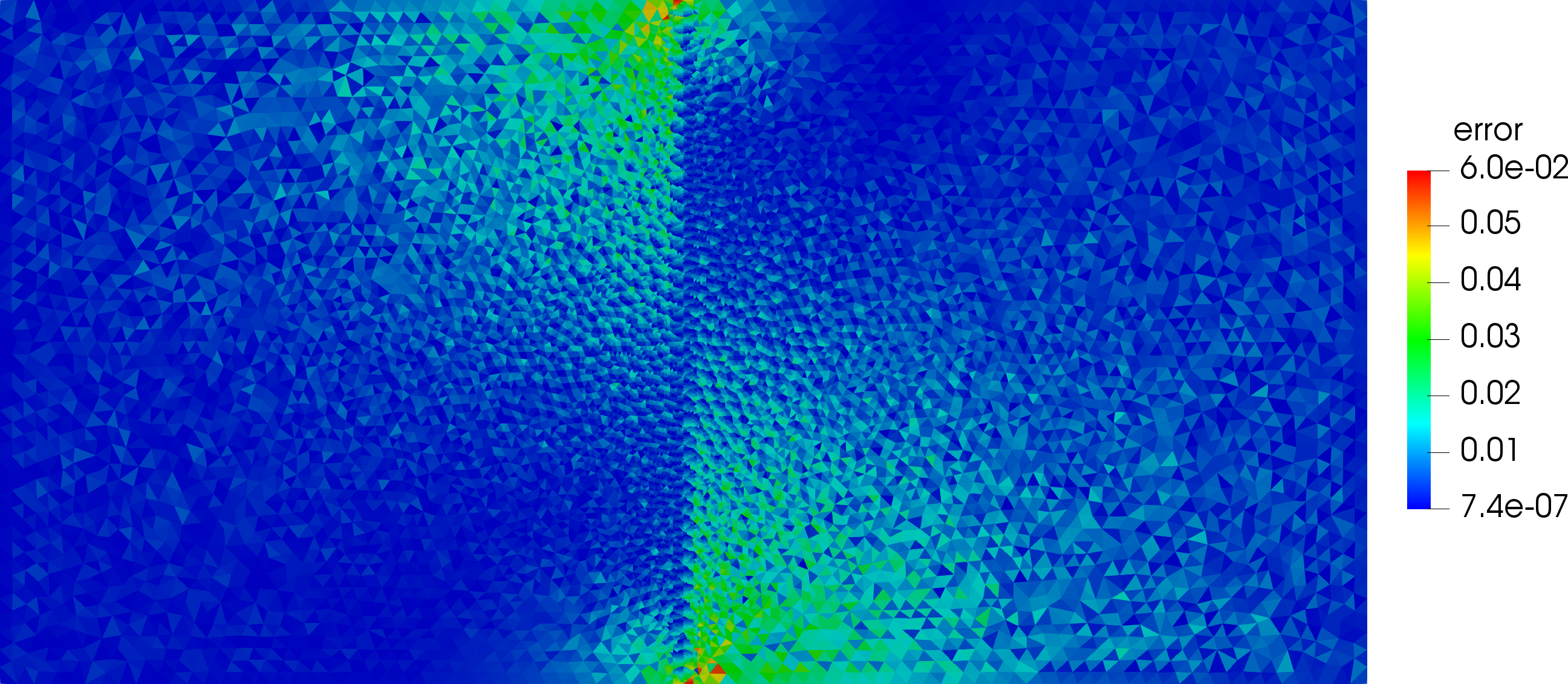}%
    \hspace*{0.1\textwidth}%
    \includegraphics[width=0.39\textwidth]{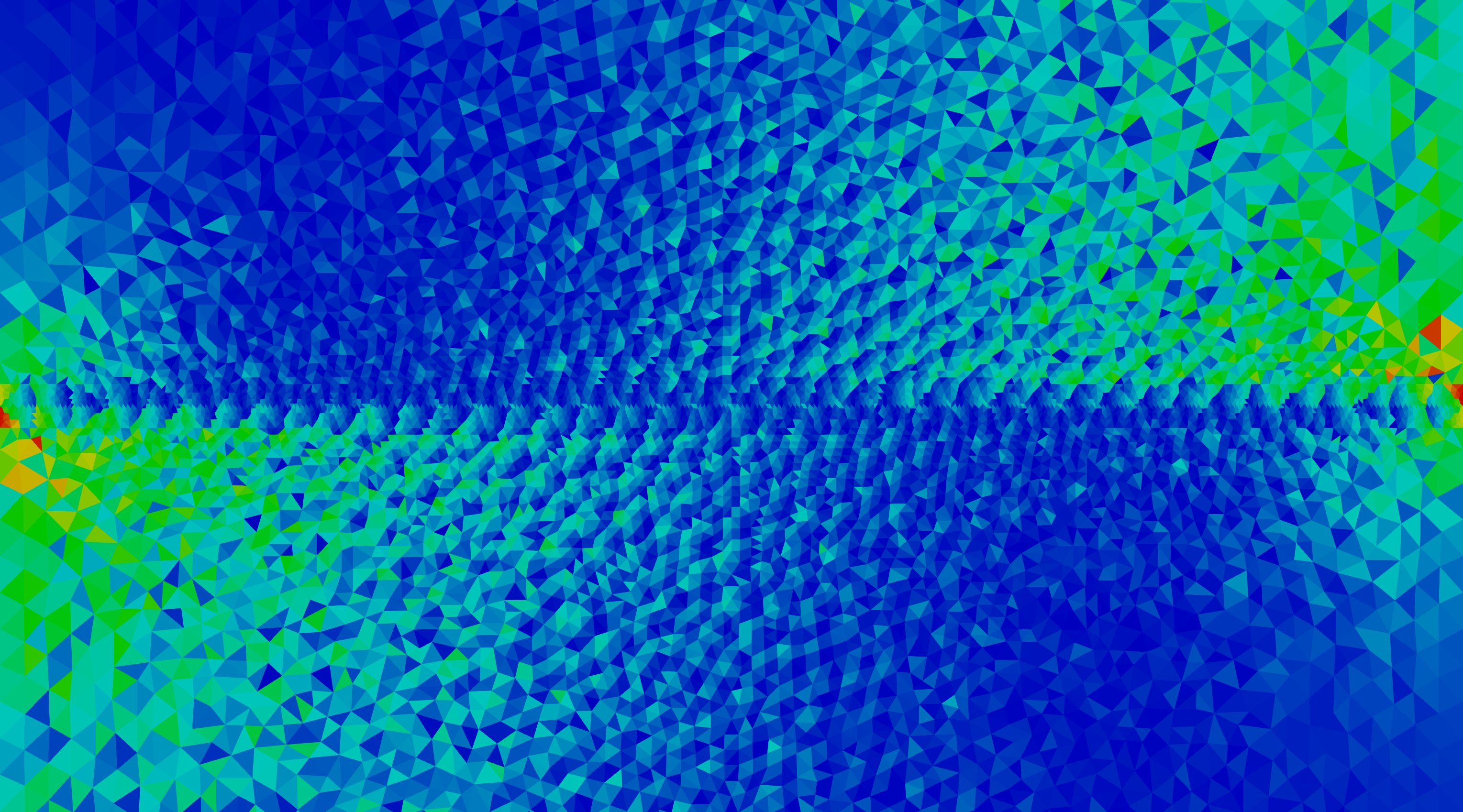}\\
    \vspace*{0.025\textwidth}
    \includegraphics[width=0.5\textwidth]{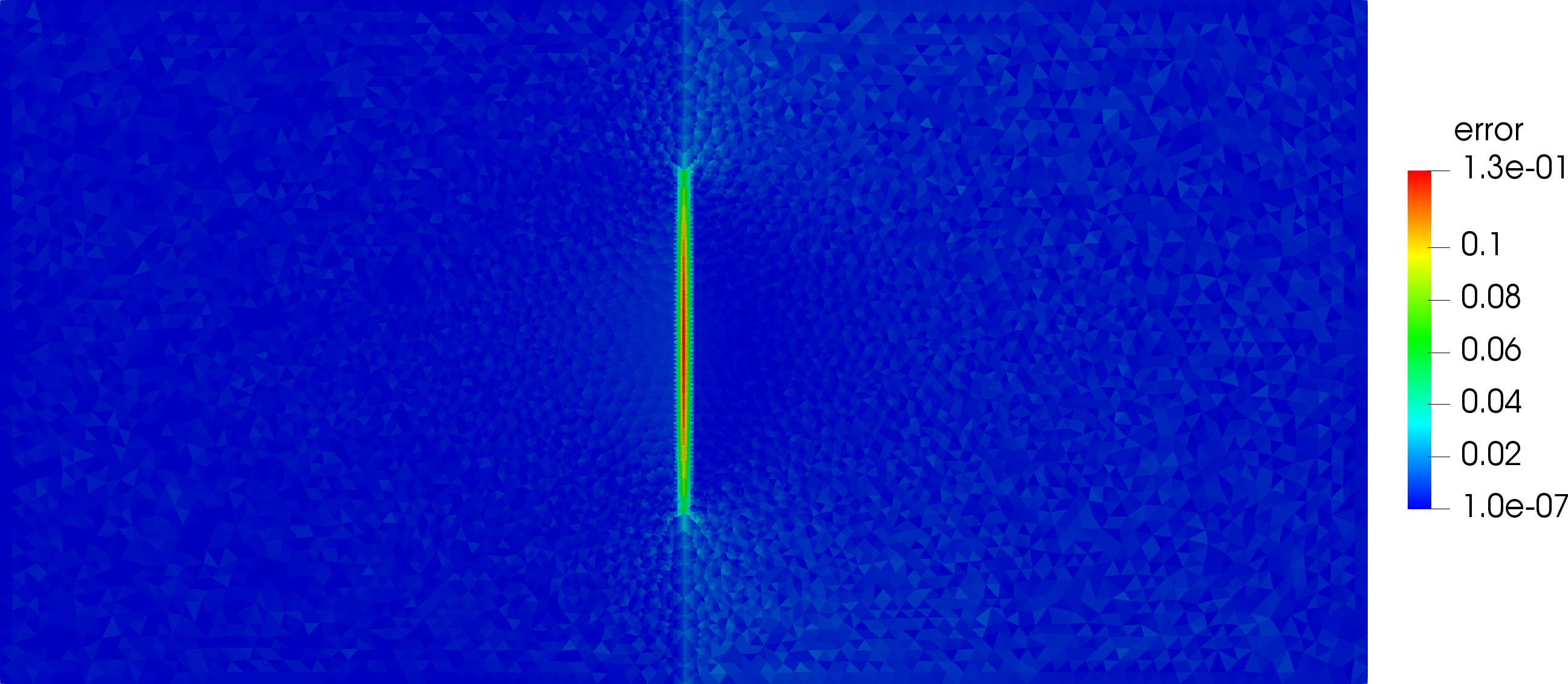}%
    \hspace*{0.1\textwidth}%
    \includegraphics[width=0.39\textwidth]{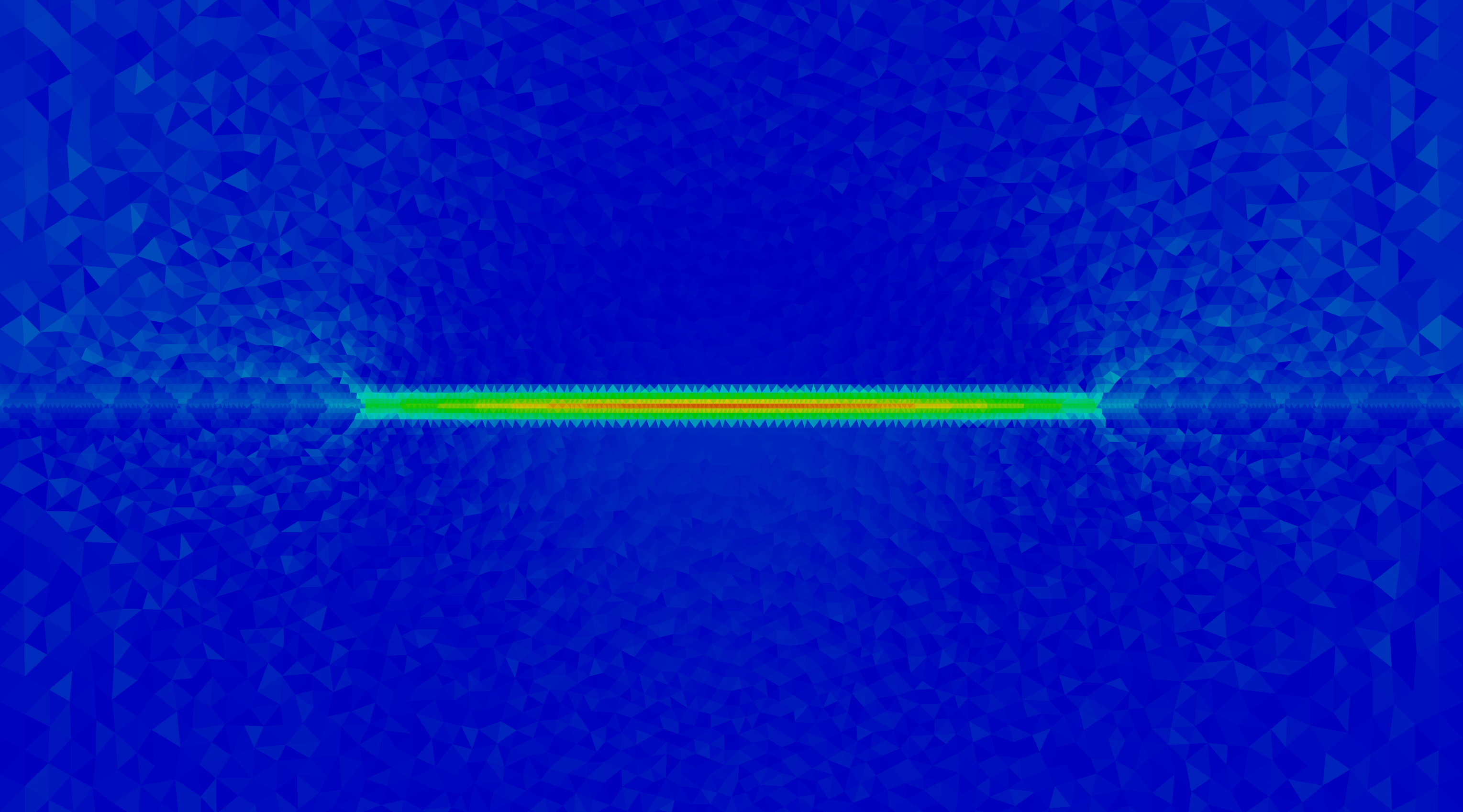}\\
    \vspace*{0.025\textwidth}
    \includegraphics[width=0.5\textwidth]{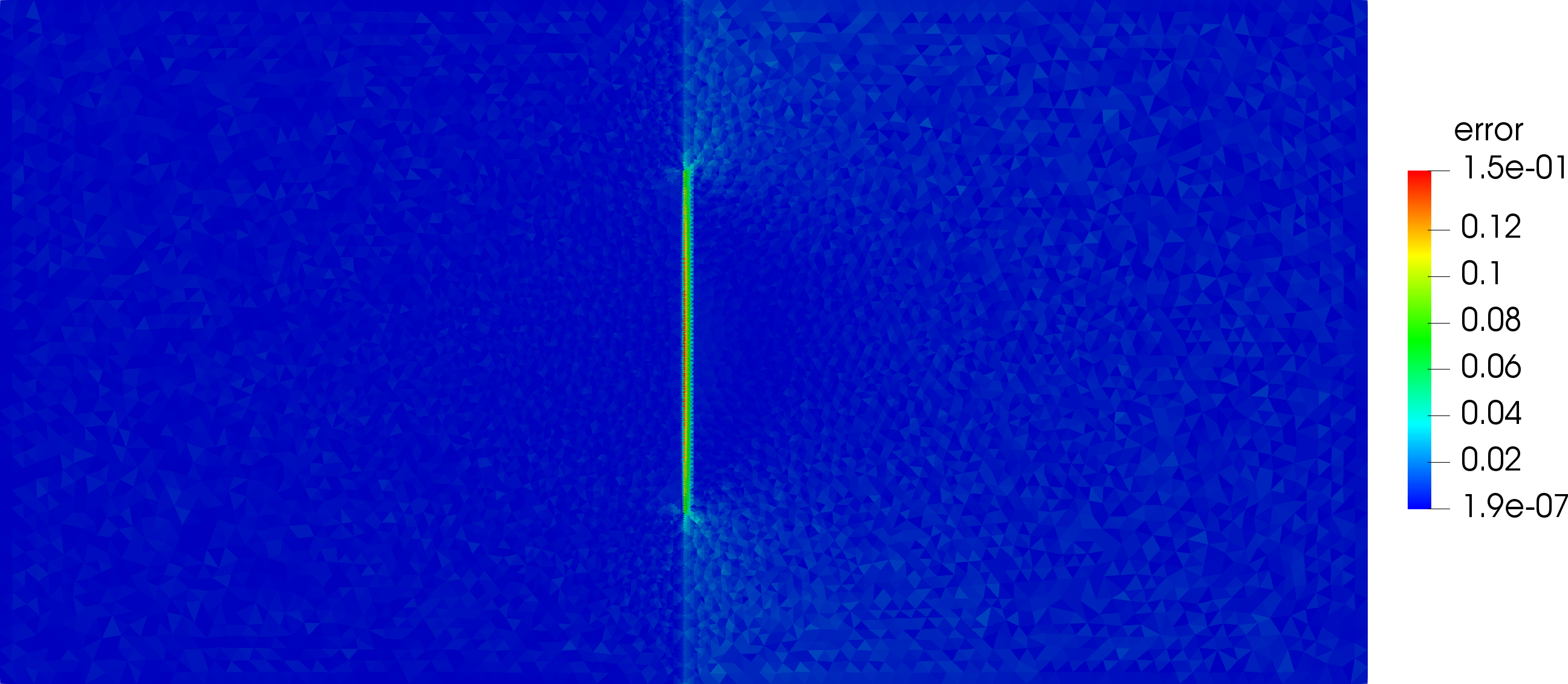}%
    \hspace*{0.1\textwidth}%
    \includegraphics[width=0.39\textwidth]{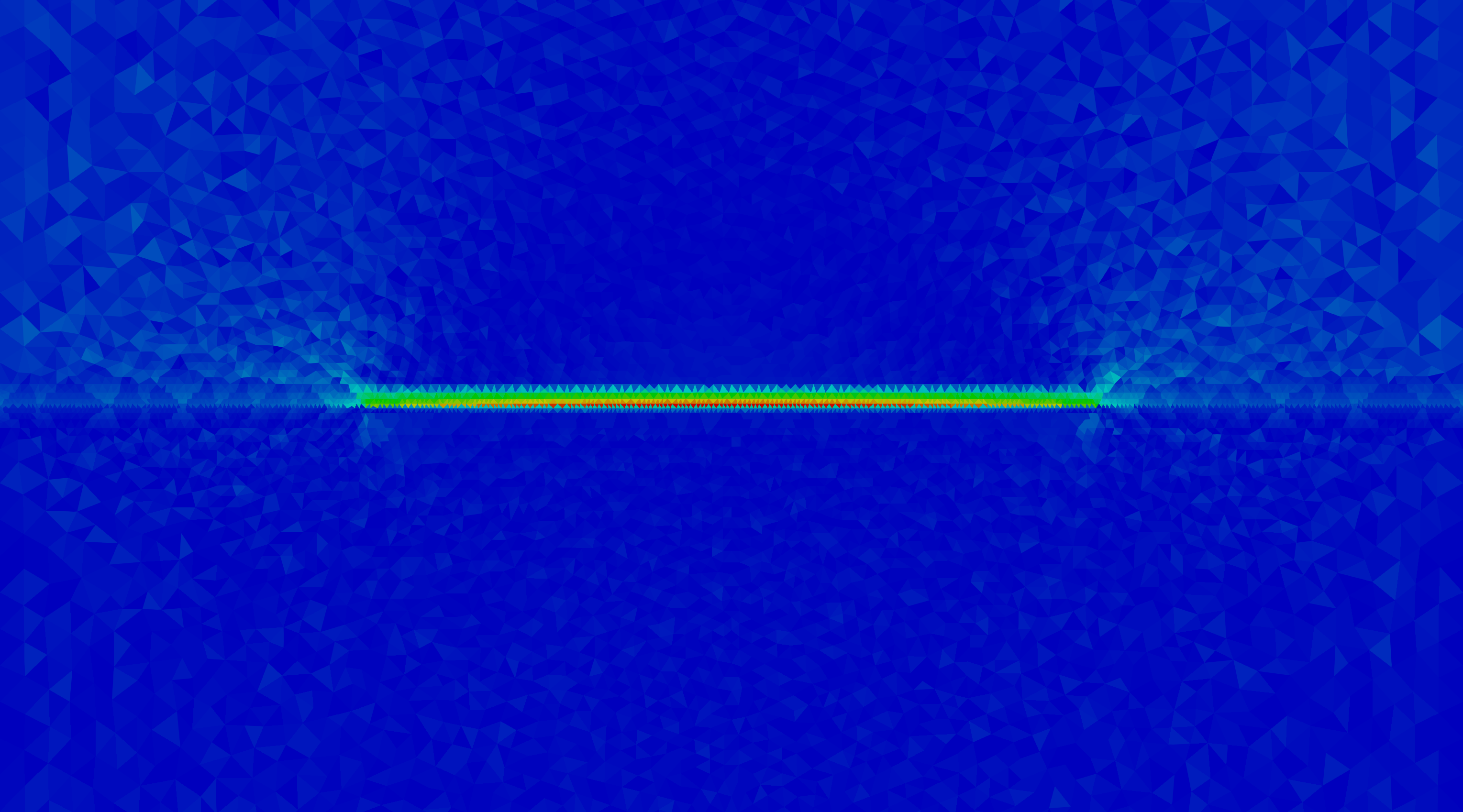}
    \caption{Graphical representation of the solution of \textit{case (iii)}. On
    the left the pressure field and the Darcy velocity, the latter shown only
    for few cells. On the right the pressure field warped and rotated. The pressure is
    scaled in $[0, 1]$, from red to blue.}
    \label{fig:example1_case1_model_error}
\end{figure}

We can observe that, as expected, for the second and third cases the difference
is mostly focused in the fault and surrounding layers: there indeed the
equi-dimensional solution exhibits strong gradients which are replaced by jumps
in the mixed-dimensional approximation due to the shrinking of the layers into
interfaces. It can also be observed the asymmetry in the model error of the
third case, reflecting the different permeabilities of the two layers of the
damage zone.
In the first case however, where the fault is conductive, pressure is
continuous everywhere and the largest values of the error are due to the strong
gradients close to the fault which are not well captured by the coarse mixed-dimensional grid.

For a more quantitative analysis we compute $\tilde{e}_m$ as the $L^2(\Omega)$
norm of $I^\eta(p^h_\Omega)-p_\mathrm{equi}^\eta$ for different apertures
$\epsilon$ of the fault and surrounding layers. If the geometrical reduction is
consistent we expect a reduction of this error with smaller values of
$\epsilon$. The results for the three cases are reported in Table
\ref{tag:example1_error}. Note that as $\epsilon$ decreases we observe a
saturation of the error. This is due to the fact that we are indeed measuring
together the model error and the numerical error, i.e. the error due to a coarse
mixed-dimensional grid.

\begin{table}
    \centering
    \begin{tabular}{|l|c|c|c|}
        \hline
        $\epsilon$          & \textit{case (i)}   & \textit{case (ii)}  & \textit{case (iii)} \\
        \hline
        $10^{-2}$           &      1.12812e-2             &       9.72846e-3              &      8.93785e-3               \\
        \hline
        $5\cdot 10^{-3}$    &      7.29157e-3             &        6.83495e-3            &        6.36322e-3              \\
        \hline
        $2.5 \cdot 10^{-3}$ &       5.7667e-3             &          5.24812e-3           &       5.21005e-3              \\
        \hline
    \end{tabular}
    \caption{Errors corresponding to different values of the $\epsilon$ for the
    three different cases of the test in Subsection \ref{subsec:example1}.}%
    \label{tag:example1_error}
\end{table}

To isolate the two effects we proceed as follows. Assuming that
$p_\mathrm{equi}^\eta$ is fully resolved and can replace the exact
equi-dimensional solution, we define the model error as
\begin{gather*}
    e_m=\| I^\eta(p_\Omega^\mathrm{exact}) -  p_\mathrm{equi}^\eta   \|_{L^2(\Omega)}
\end{gather*}
where $p_\Omega^\mathrm{exact}$ is the fully resolved mixed-dimensional solution, which is in general not available
since the aim of reduced model is to avoid extreme refinement. Let
$p_\Omega^{h_2}$ be the solution for a second mixed-dimensional grid with $h_2<h$, then
\begin{gather*}
    e_m=\| I^\eta(p_\Omega^\mathrm{exact}) -  p_\mathrm{equi}^\eta
    \|_{L^2(\Omega)}=\| I^\eta(p_\Omega^\mathrm{exact}) +I^\eta(p^h_\Omega) -I^\eta(p^h_\Omega)
    -  p_\mathrm{equi}^\eta   \|_{L^2(\Omega)}\\ \leq \| I^\eta(p_\Omega^\mathrm{exact})
    -I^\eta(p^h_\Omega) \|_{L^2(\Omega)}+ \| I^\eta(p^h_\Omega)   -  p_\mathrm{equi}^\eta
    \|_{L^2(\Omega)} \\ \leq \|  I^\eta(p_\Omega^\mathrm{exact}) -I^\eta(p^{h_2}_\Omega)
    \|_{L^2(\Omega)} + \|I^\eta(p^{h_2}_\Omega) -I^\eta(p^h_\Omega) \|_{L^2(\Omega)} +
    \| I^\eta(p^h_\Omega)   -  p_\mathrm{equi}^\eta   \|_{L^2(\Omega)}\\= e_{h_2} + \Delta p + \tilde{e}_m
\end{gather*}
Here $\tilde{e}_m$ is the
previous - incorrect - estimate of the model error, and $\Delta p$ estimates the
effect of grid refinement. If we assume that $e_{h_2}$ is small, since $h_2<h$,
then we state that $e_m\leq \tilde{e}_m + \Delta p$. With similar arguments we
can also obtain $e_m\geq \tilde{e}_m - \Delta p$. The values of this upper and
lower bounds are reported for the three cases in Figure
\ref{fig:err_modello_plot}, where we can observe that the lower bound decreases
linearly with $\epsilon$ as expected.
\begin{figure}
    \centering
    \includegraphics[width=0.32\textwidth]{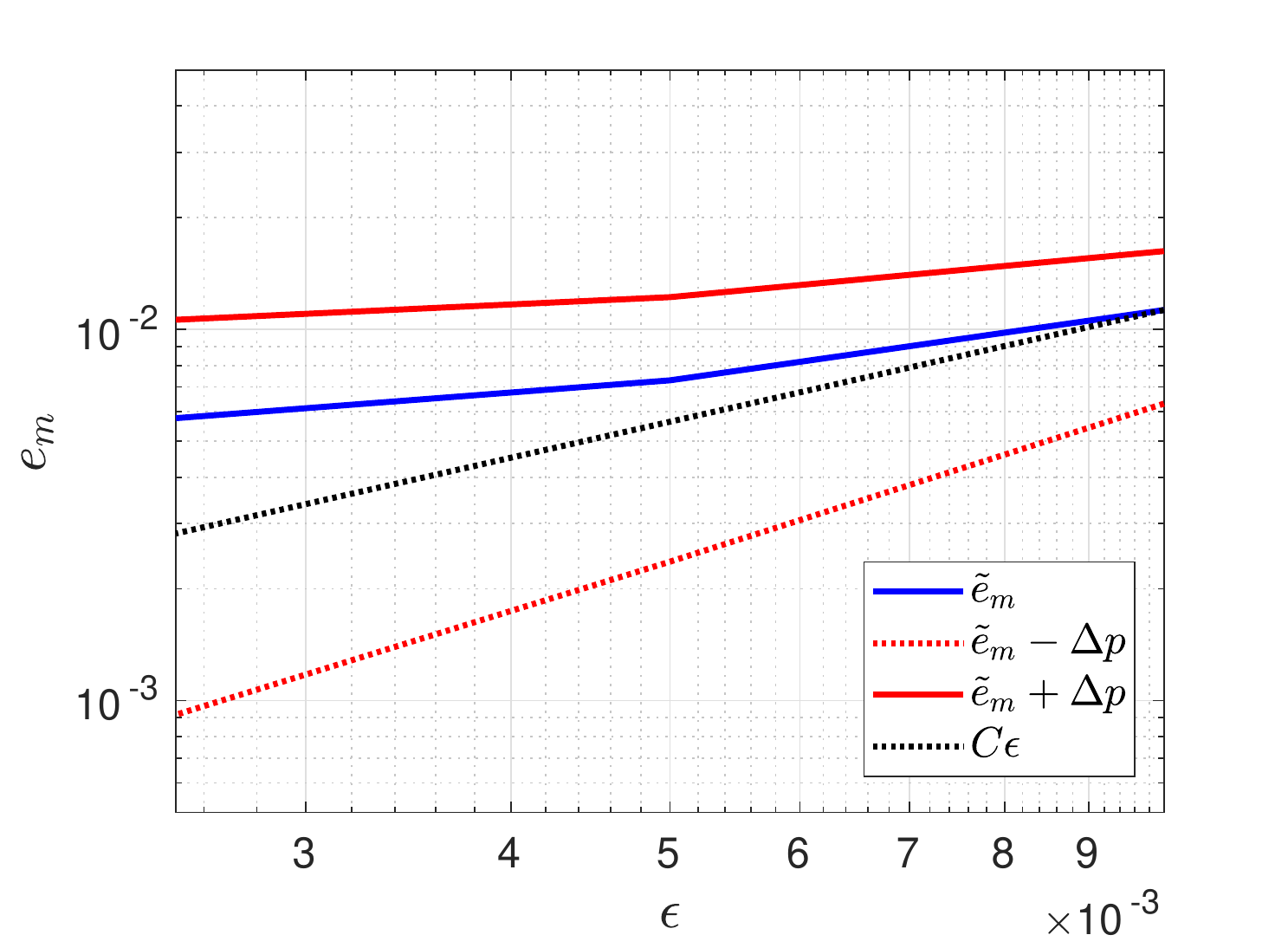}
    \includegraphics[width=0.32\textwidth]{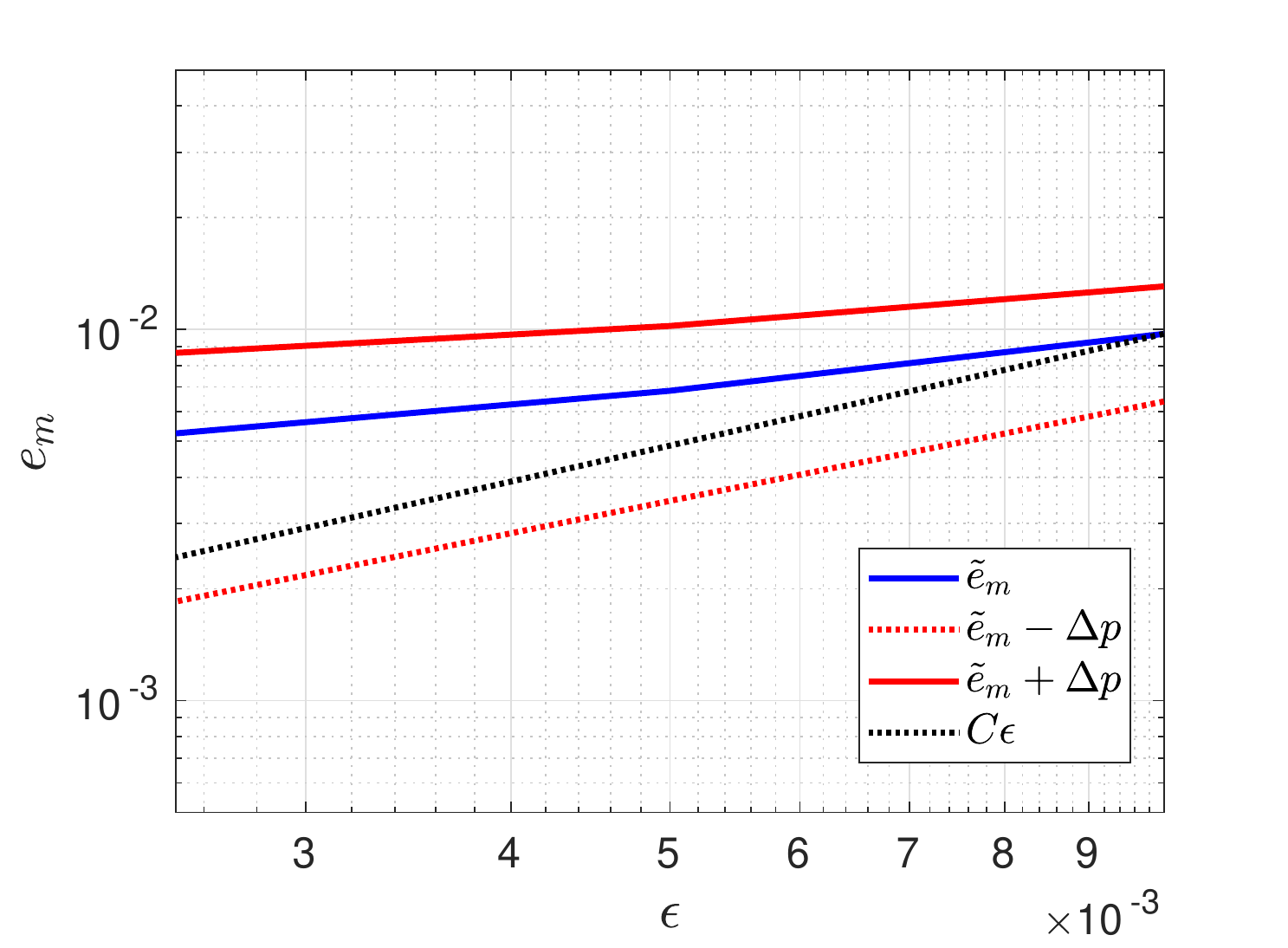}
    \includegraphics[width=0.32\textwidth]{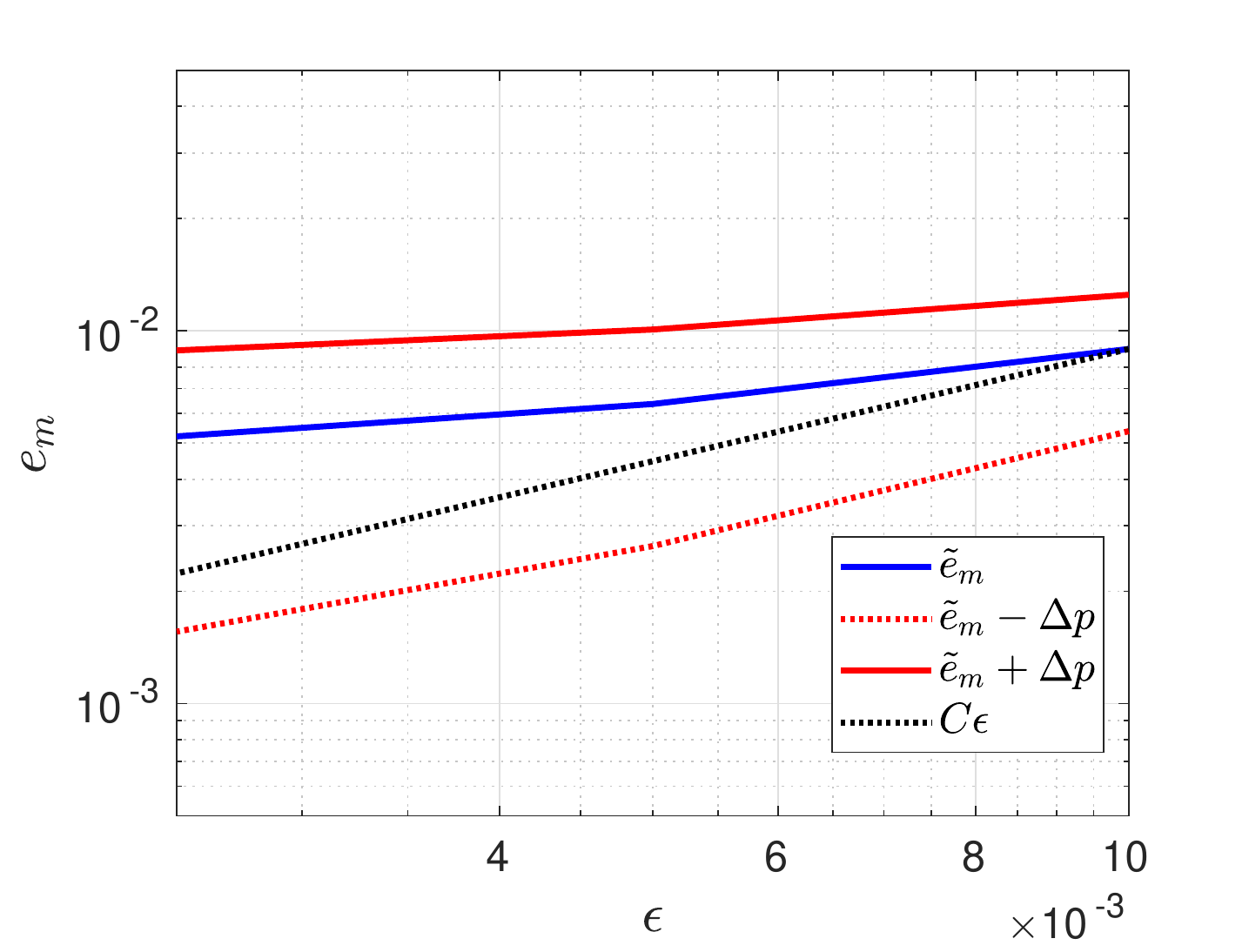}
   \caption{Estimate of the model error, upper and lower bounds compared with a linear
   trend in $\epsilon$ for \emph{case (i)}, \emph{case (ii)} and \emph{case
   (iii)} from left to right.}%
   \label{fig:err_modello_plot}
\end{figure}

\subsection{A three-dimensional example}\label{subsec:example3}

This last test case is inspired by \textit{Case 1: single fracture} of the
benchmark initiative \cite{Berre2018}. We consider a single fault immersed in a
three-dimensional rock matrix, defined as $\Omega = (0, 100)^3$. The fault has
thickness equal to $\epsilon_\gamma = 10^{-3}$ and permeability equal to
$\alpha_\gamma^2 = 10^{-7} \epsilon_\gamma$ and $\alpha_\Gamma^2 = 10^{-7} /
\epsilon_\gamma$. The fault is identified by the following four corners
\begin{gather*}
    c_0 = (0, 0, 80)
    \quad
    c_1 = (100, 0, 20)
    \quad
    c_2 = (100, 100, 20)
    \quad
    c_3 = (0, 100, 80).
\end{gather*}
In the rock matrix the permeability is $\alpha^2_\Omega = 10^{-6}$ for $z \geq 10$,
otherwise $\alpha^2_\Omega = 10^{-5}$. For the boundary condition, we impose 4 for the pressure in
the narrow band $\{0\}\times (0, 100) \times (90, 100)$ and 0 in the portion
$(0, 100) \times \{0\} \times (0, 100)$ of the boundary. The assume zero flux
elsewhere.
For the damage zone we have  a thickness, greater than the fault, equal to $\epsilon_\mu = 10^{-1}$ with
permeability $\alpha_\mu^2 = 10^{-2} \epsilon_\mu$ and $\alpha_M^2 = 10^{-2} /
\epsilon_\mu$ on the upper part and $\alpha_\mu^2 = 10^{-1} \epsilon_\mu$ and
$\alpha_M^2 = 10^{-1} / \epsilon_\mu$ on the lower part.
The numerical solution considers a grid composed of $\sim 9.5k$ tetrahedra for
$\Omega$, $757$ triangles for the fault grid $\gamma$, and $\sim 1.5k$
triangles for the damage zone $\mu$. This correspond to the coarsest mesh of
the benchmark \cite{Berre2018}.
\begin{figure}
    \centering
    \includegraphics[width=0.5\textwidth]{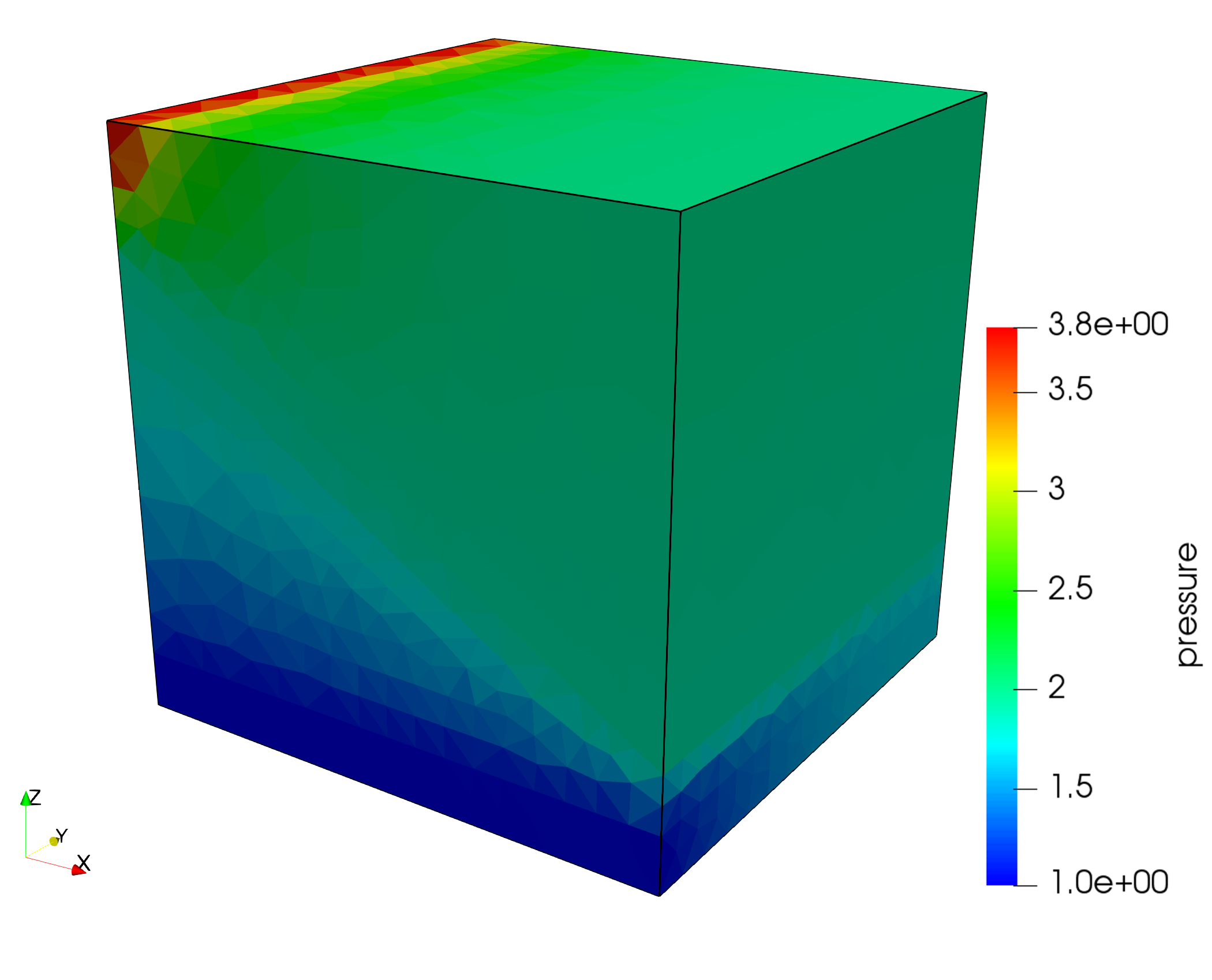}%
    \includegraphics[width=0.5\textwidth]{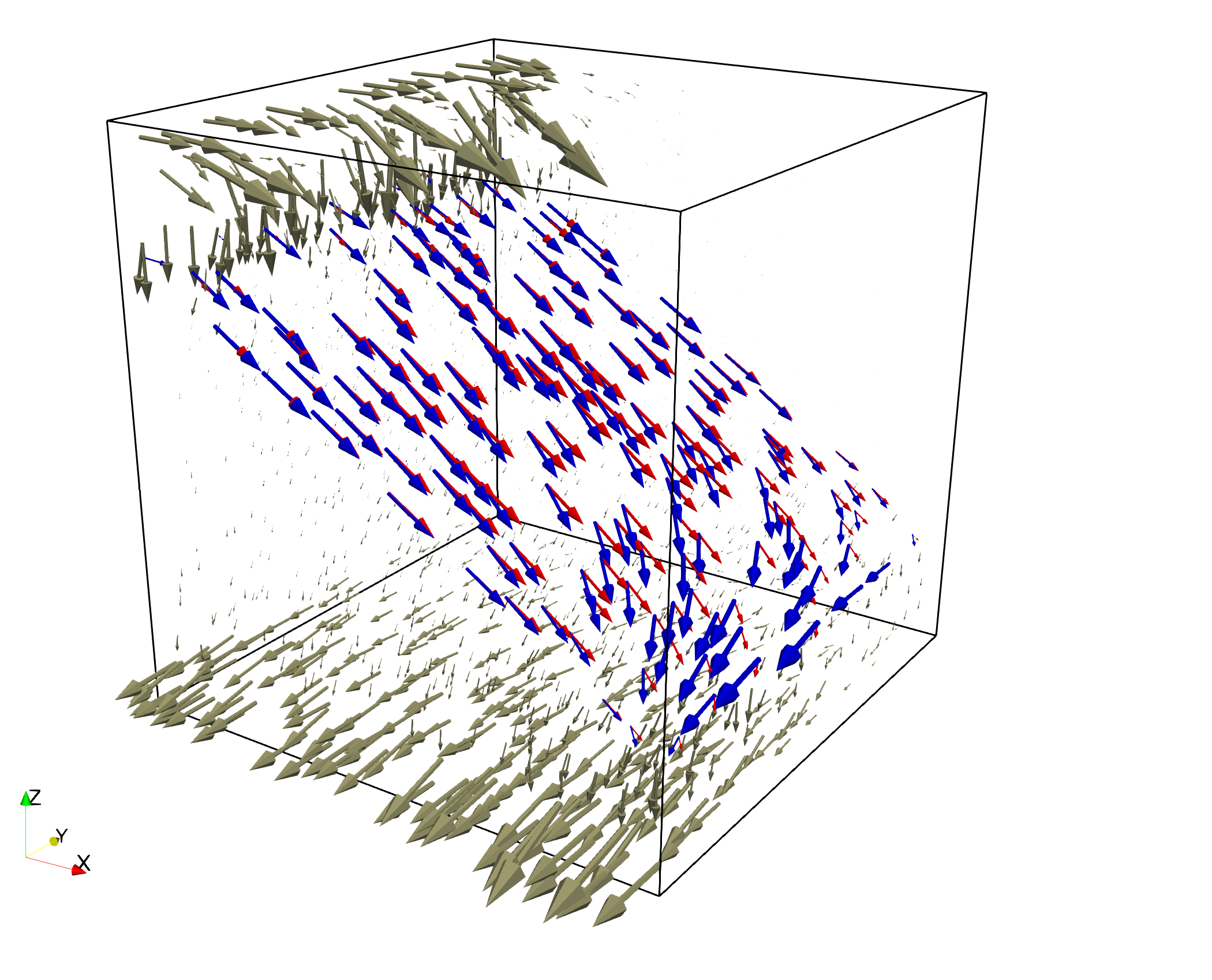}
    \caption{Graphical representation of the solution of the example in
    Subsection \ref{subsec:example3}. On
    the left the pressure field, on the right the velocity field with different
    colours. With grey arrows we indicate the velocity in the rock matrix, with
    blue and red in the upper and lower part of the damage zone, respectively.
    The velocity in the fault is zero, then not represented.}
    \label{fig:example3_solution}
\end{figure}

In Figure \ref{fig:example3_solution}, we present the obtained numerical solution.
On the left the pressure field; since the fault is less permeable than
the damage zone and rock matrix the solution exhibits a steep variation across
the fault. On the right the velocity is represented with different colours for
the rock matrix, fault, and each part (top in blue and bottom in red) of the
damage zone. The
arrows corresponding to the rock matrix and the bottom part of the damage zone
are enlarged by a factor $10^8$, while
for the upper part of the damage zone we use a factor $10^9$.
The velocity along the fault is zero because of its low permeability. Since the
rock matrix is much less permeable than the damage zone the flow tends to be
more concentrated in the latter. The arrows are coherently pointing from the inflow part to
the outflow

Also in this three-dimensional case, the obtained solution is physically sound
and shows the capability and potentiality of the model.

\section{Conclusion}\label{sec:conclusion}

In this work we have introduced a new conceptual model {for
the simulation of Darcy flows in porous media crossed by large faults, i.e. by
complex regions characterized by an inner thin core surrounded by damage zone
where, due to the accommodation of strain, a large number of small fractures is
present. Following a well established literature, we approximate the thin fault
region, and in particular both the core and the damage zone as lower-dimensional
and geometrically coincident objects, i.e. lines in $2d$ and surfaces in $3d$ to
avoid extreme mesh refinement. However, unlike previous works, the presence of
three lower dimensional interfaces instead of a single fault object gives us the
freedom to better characterize the fluid-dynamic behavior of the structure, by
using different permeability and a different thickness for the core and the
damage zone, and even accounting for asymmetries across the fault. Moreover, we
highlight the fact that this approach can be extended, in different areas of
application, to the efficient simulation of thin layered porous media}.

We have proven the well posedness of the weak formulation of the mixed
dimensional problem and discussed its numerical approximation and a set of
examples. The numerical tests confirm the ability of the resulting numerical
scheme to handle high contrasts in permeability among the rock matrix, fault,
and damage zone. Moreover, {by comparing the mixed dimensional solution
with fully resolved equi-dimensional simulations} we have shown that the model
error {associated with geometrical model reduction} is only focused where the
fault or the damage zone give a jump in the pressure field {and, moreover,
this error decreases for thinner faults as expected}. The obtained solutions are
thus physically sound and the model can be regarded as a promising strategy.
{Possible future developments include the possibility to account for
heterogeneities, and changes in time, in the aperture and permeability of the
different layers. These variability could be the result of geochemical or
mechanical processes. Moreover we plan to couple the flow problem with
transport, to fully exploit the freedom and flexibility given by the detailed
description of the permeability of the layers. We remark that the coupling with
transport which is natural thanks to the mixed conservative approximation of the
velocity field.}

\section{Acknowledgements}

We acknowledge the PorePy development team: Runar Berge, Inga Berre, Eirik
Keilegavlen, and  Ivar Stefansson.
Finally, the authors warmly thanks Stefano Scial\`o for many fruitful
discussions.

\bibliographystyle{plain}
\bibliography{biblio}

\end{document}